\newcommand{\Z}{\mathbb{Z}}
\newcommand{\N}{\mathbb{N}}
\newcommand{\ff}{\mathbb{F}}
\newcommand{\G}{\Gamma}
\newcommand{\sk}{\smallskip}
\newtheorem{thm}{Theorem}[section]
\newtheorem{prop}[thm]{Proposition}
\newtheorem{lem}[thm]{Lemma}
\newtheorem{coro}[thm]{Corollary}
\theoremstyle{definition}
\newtheorem{rem}[thm]{Remark}
\newtheorem{exam}[thm]{Example}
\newtheorem{defi}[thm]{Definition}
\theoremstyle{remark}
\begin{document}
\numberwithin{equation}{section}
\title[Waring's problem and generalized Paley graphs]{The Waring's problem over finite fields \\ through generalized Paley graphs}
\author[R.A.\@ Podest\'a, D.E.\@ Videla]{Ricardo A.\@ Podest\'a, Denis E.\@ Videla}
\dedicatory{\today}
\keywords{Cayley graphs, finite fields, generalized Paley graphs, Waring number}
\thanks{2010 {\it Mathematics Subject Classification.} Primary 11P05;\, Secondary 05C12, 05C25, 11A07.}
\thanks{Partially supported by CONICET, FONCyT and SECyT-UNC}
\address{Ricardo A.\@ Podest\'a, FaMAF - CIEM (CONICET), Universidad Nacional de C\'ordoba, Av.\@ Medi\-na Allende 2144, Ciudad Universitaria, (5000), C\'ordoba, Argentina. 
{\it E-mail: podesta@famaf.unc.edu.ar}}
\address{Denis E.\@ Videla,  FaMAF - CIEM (CONICET), Universidad Nacional de C\'ordoba, Av.\@ Medina Allende 2144, Ciudad Universitaria, (5000), C\'ordoba, Argentina. 
{\it E-mail: devidela@famaf.unc.edu.ar}}

\begin{abstract}
We show that the Waring number over a finite field $\ff_q$, denoted $g(k,q)$, when exists coincides with the diameter of the generalized Paley graph $\G(k,q)=Cay(\ff_{q},R_k)$ with $R_k=\{x^k : x\in \ff_q^*\}$. We find infinite new families of exact values of $g(k,q)$ from  a characterization of graphs $\G(k,q)$ which are also Hamming graphs proved by Lim and Praeger in 2009.  
Then, we show that every positive integer is the Waring number for some pair $(k,q)$ with $q$ not a prime. Finally, we find a lower bound for $g(k,p)$ with $p$ prime by using that $\G(k,p)$ is a circulant graph in this case.
\end{abstract}

\maketitle

\section{Introduction}

\subsubsection*{Motivation and historical background}
The classical problem, introduced in 1770 by Edward Waring, asks whether given a natural number $k$, there is a number $g(k)$ such that every natural number can be written as the sum of at most a number $g(k)$ of $k$-th powers. For instance, $g(1)=1$, $g(2)=4$, $g(3)=9$. 
This fact was proved by Hilbert in 1909 and it is known as the Hilbert-Waring theorem  since then.

In the context of finite fields, given a finite field $\ff_q$ and a positive integer $k$, the problem is to decide if it is possible to express every element of the field as a sum of $k$-th powers in the field. In this case, the Waring number $g(k,q)$ is the minimal value 
$s$ such that every element of $\ff_q$ is a sum of a number $s$ of $k$-th powers. 
It is not difficult to see that 
$$g(k,q)=g(\gcd(k,q-1),q),$$ 
and thus we will always assume that $k\mid q-1$ (if $(k,q-1)=1$ then $g(k,q)=g(1,q)=1$). 
Notice that the Waring number does not exist for every finite field, for instance if we take $k=4$ and $q=9$
(see Lemma~3.1).

There are three general methods to estimate $g(k,q)$, and if possible to calculate it: ($a$) additive combinatorics, 
($b$) circle methods (exponential sums) and ($c$) lattice methods. We propose to use a new strategy, 
based on the computation of the diameter of Cayley graphs whose vertex sets are the finite fields and whose connection set is the multiplicative subgroup of $k$-th powers, the so called generalized Paley graphs. 
This point of view was previously explored by Yahya O.\@ Hamidoune (according to Garc\'ia-Sol\'e) and by Christine Garc\'ia and Patrick Sol\'e (see \cite{GS}) in the case $q=p$ is a prime, and used by the last authors in \cite{GS} to find a lower bound for $g(k,p)$.

The goal of the paper is to give some new exact values for $g(k,p^m)$  with $m\in \N$ and a lower bound for $g(k,p)$, where $p$ is a prime.

\subsubsection*{Outline and results}
We now summarize the results in the paper. In Section 2, we briefly survey the results about Waring numbers over finite fields in the literature, reflecting the fact that, although there are many upper bounds for $g(k,q)$, very few exact formulas and lower bounds for $g(k,q)$ are known. 

In Section 3, we relate Waring numbers $g(k,q)$ with certain Cayley graphs of the form 
$$\G(k,q) = Cay(\ff_q, R_k) \quad \text{with} \quad R_k=\{x^k : x \in \ff_q^*\},$$ 
the so called generalized Paley graphs (GP-graphs, for short).
In Lemma 3.1 we give conditions on $\Gamma(k,q)$ to be connected, or equivalently, for $g(k,q)$ to exist. 
In Theorem \ref{gdiam} we show that, when exists, $g(k,q)$ equals the diameter of $\Gamma(k,q)$. 

In the following section, we consider GP-graphs which are Hamming graphs $H(b,q)$, since these graphs have known diameter $b$.
In Theorem \ref{WN g=b}, for $q=p^a$, we prove that 
$$g(\tfrac{q^{b-1}+\cdots +q+1}{b}, q^b)=b$$ 
for every $b>1$, provided that $b \mid q^{b-1}+\cdots +q+1$. 
Then, we use this result to show in Proposition~\ref{coro sobre} that every positive integer is the Waring number of some pair $(k,q)$.
Section~5 is somehow technical and can be skipped at first reading. We give necessary and sufficient arithmetical conditions for $b$ to 
divide $q^{b-1}+\cdots +q+1$. 
We will use this conditions to find infinite families of pairs $k,q$ for which we can give the exact value $g(k,q)$.

Section 6 contains the main results in the paper. Given a prime $p$ and integers $a, b$ where $b$ is coprime with $p$, in Theorem 6.1 we give simple conditions on $b$ and $x=p^a$ such that 
\begin{equation} \label{fla g=b}
g(\tfrac{p^{ab}-1}{b(p^a-1)},p^{ab})=b
\end{equation}
holds. 
Special cases are given in Corollaries 6.2--6.4 and 6.8--6.10. In Proposition 6.11 we show that if $b$ is an integer coprime with a prime $p$ and $a$ is a multiple of $\varphi(rad(b))$ then 
\eqref{fla g=b} holds. In particular, if $b=r^t$ with $r$ prime, formula \eqref{fla g=b} holds 
provided $r-1 \mid a$ (see Corollary 6.13). 

Finally, in Section 7, we give a new explicit lower bound for $g(k,p)$ with $p$ an odd prime. 
There are only two known lower bounds (to our best knowledge) for $g(k,q)$, one of them for $q=p$ prime. 
By using circulant GP-graphs (those associated to $\ff_p$) and using a known bound for the diameter of these graphs, 
for a prime $p\equiv 1 \pmod{2h}$ we get (see Proposition \ref{lower bound})
$$g(\tfrac{p-1}{2h},p) \ge \tfrac 12 \sqrt[h]{h! \, p} - \tfrac{h+1}2.$$
This bound only depends on the parameters $p$ and $h$ (compare with \eqref{lower 1} below).

\section{Summary of known results}
Here we briefly summarize the most important facts on the numbers $g(k,q)$. 
See \cite{MP} for more information.
Here, let $q=p^m$ with $p$ prime and $m$ a natural number and put $n=\frac{q-1}k$. 
We divide the results into two categories: exact values and bounds. We list the results chronologically.

\subsection{Exact values}
There are some results on exact values of Waring numbers.

\subsubsection*{Exact values for $g(k,p)$ with $p$ prime}
\begin{enumerate}[$(a)$]
	\item We have $g(k,p) = k$ for $k=1$, $2$, $\frac{p-1}2$ and $p-1$ 
	(1813, \cite{C}; 1977, \cite{Sm}). \sk 
	
	\item Small computed some exact values of $g(k,17)$ and $g(k,2)=1$ for any $k$ and the values for $g(3,p)$ for $p$ prime. 
Namely, $g(3,7)=3$, $g(3,p)=2$ for $p\equiv 1 \pmod 3$ and $p\ne 7$ and $g(3,p)=1$ for all other primes (1977, \cite{Sm}). \sk 

	\item Moreno and Castro gave some conditions depending on the $p$-adic weight of $k$ and the field size $q$ to have 
$g(k,q)=2$ or $g(k,q)\le 2$ (2003, \cite{MC}).  \sk 
	
	\item If $a,\, b$ are the unique positive integers with $a>b$ such that $a^2+b^2+ab =p$ then (2007, \cite{CiCP})
		$$g(k,p) = \begin{cases} 
	   a+b-1 & \quad \text{for } n=3, \\[1.5mm] 
	   \lfloor \frac 23 a + \frac 13 b\rfloor & \quad \text{for } n=6.
	\end{cases}$$

	\item If $a,\, b$ are the unique positive integers with $a>b$ such that $a^2+b^2 = p$, 
	then $g(k, p) = a-1$ for $n = 4$ (2007, \cite{CiCP}). 
\end{enumerate}

\subsubsection*{Exact values for $g(k,p^m)$ with $m>1$}
\begin{enumerate}[$(a)$]
\item If $2 \le k< \sqrt[4]q +1$ then $g(k,q)=2$ (1977, \cite{Sm2}). \sk

\item If $k\mid p^{\ell}+1$ then $g(k,p^{2\ell s})=2$ for $s\ne 1$ (2008, \cite{MC2}, for $k\ne p^{\ell}+1$; 2018, \cite{PV}, for $k=p^{\ell}+1$). \sk 
	
\item Let $p,\,r$ be primes such that $p$ is a primitive root modulo $r^m$ for some $m \in \N$. Then 
	\begin{equation} \label{ex fla 1}
	g \big( \tfrac{p^{\varphi(r^m)}-1}{r^m},p^{\varphi(r^m)} \big) = \tfrac 12 (p-1) \varphi(r^m)
	\end{equation} 
	where $\varphi$ denotes the Euler's totient function. 
	If in addition $p$ and $r$ are odd, then
	\begin{equation} \label{ex fla 2}
    g \big( \tfrac{p^{\varphi(r^m)}-1}{2r^m},p^{\varphi(r^m)} \big) =
	\begin{cases}
	r^{m-1}\lfloor \tfrac{pr}4 -\tfrac{p}{4r}\rfloor & \qquad \text{ if }	r<p,		\\[1.5mm]
	r^{m-1}\lfloor \tfrac{pr}4 -\tfrac{r}{4p}\rfloor & \qquad \text{ if } r\ge p. 
	\end{cases}
	\end{equation} 
 (2010, \cite{WW} for the case $m=1$ and \cite{KK} for the general case).	
\end{enumerate}

\subsection{Upper bounds} 
There are many upper bounds in the literature (for more bounds we refer for instance to \cite{CMS} or \cite{MP}). These are the most typical results on Waring numbers. We divide them into two cases, $q=p$ prime and $q$ a prime power.

\subsubsection*{Upper bounds for $g(k,p)$ with $p$ prime}
Here $n=\tfrac{p-1}{k}$.
\begin{enumerate}[$(a)$]
\item $g(k,p)\le k$ with equality for $k=1,\,2,\, \frac{p-1}2,\, p-1$ (1813, \cite{C}). \sk 

\item $g(k,p) \le [\tfrac k2]+1$ for any $n>2$ and $k>1$ (1959, \cite{CMS}). \sk 

\item If $k$ is a proper divisor of $p-1$ and $k\ge (q-1)^{\frac 47}$, 
then $g(k,p) \le 170\frac{k^{7/3}}{(p-1)^{4/3}} \ln p$ (1988, \cite{GV}). \sk 

\item If $k< \frac{p(\ln \ln p)^{1-\varepsilon}}{\ln p}$ then $g(k,p)\ll(\ln k)^{2+\varepsilon}$ (1994, \cite{Kon}). \sk 

\item For any $\varepsilon > 0$ there exists a constant $c_{\varepsilon}$ such that for any $k\ge 2$ and $p \ge \frac{k\ln k}{(\ln (\ln k+1))^{1-\varepsilon}}$, then $g(k,p)\le c_{\varepsilon} (\ln k)^{2+\varepsilon}$ (1994, \cite{Kon}). \sk 

\item If $n\ge 2$, then $g(k,p)\le 83 \sqrt k$ (2008, \cite{CP}). \sk 

\item If $\varphi(n)> \ell$ for some positive integer $\ell$ then $g(k,p)\le c_\ell \, \sqrt[\ell]k$ for some constant $c_\ell$
(2009, \cite{CiCP}).
\end{enumerate}

\subsubsection*{Upper bounds for $g(k,p^m)$ with $m>1$}
\begin{enumerate}[$(a)$]
\item If $q^{s-1}>(k-1)^{2s}$ then $g(k,q)\le s$ (1998, \cite{Win}). \sk 

\item If $d=\frac{p-1}{(m,p-1)}$ then $g(k,q) \le m \cdot  g(d,p)$ (1998, \cite{Win}). \sk 

\item If $q^{\frac 37}+1\le k\le q^{\frac 12}$, then $g(k,q)\le 8$ (2009, \cite{Ci}). \sk 

\item If $k< \sqrt q$ then $g(k,q)\le 8$ (2009, \cite{GlR}). \sk 

\item $g(k,p^2) \le 16 \sqrt{k+1}$ and  $g(k,p^n) \le 10 \sqrt{k+1}$ for $n\ge 3$ (2009, \cite{Ci}). \sk 

\item For any $\varepsilon> 0$, $k\le q^{1-\varepsilon}$ and if $\ff_q = \ff_p(x^k)$ for some $x\in \ff_q$, there is a constant
$c(\varepsilon)$ such that $g(k,q)\le c(\varepsilon)$ (2011, \cite{Gl}).   
\end{enumerate}

\subsection{Lower bounds}
To our knowledge, there are only three lower bounds for Waring numbers $g(k,q)$, and in two cases $q=p$ is prime.

\begin{enumerate}[$(a)$]
\item If $p$ is a prime with of the form $p=kn+1$, 
then 
\begin{equation} \label{lower 1}
g(\tfrac{p-1}{n},p) \ge \frac{\sqrt[\varphi(n)]{p}-1}{2 c_n}
\end{equation} 
where $c_n$ is a constant depending only on $n$ and $\varphi$ is the Euler function (1993, \cite{GS}). \sk 

\item  If $r$ is prime and $p$ is a primitive root of unity modulo $r$ then (2001, \cite{Win2})
\begin{equation} \label{lower 2} 
g(\tfrac{p^{r-1}-1}{r},p^{r-1}) \ge \tfrac 12 (\sqrt[r-1]{rk}-1).  
\end{equation} 

\item If $p$ is a prime and $k=\frac{p-1}{n}$ then  
\begin{equation} \label{lower 4}
g(\tfrac{p-1}n,p) \ge \frac{(1-\tfrac 1p)}{2C_n} \, \sqrt[\varphi(n)]{p}  \gg \frac{1}{\sqrt{\log n}} \sqrt[\varphi(n)]{p} 
\end{equation}
with 
\begin{equation} \label{Cn}
C_n= \prod_{\ell \, \mid \, n} \ell^{\frac{1}{2(\ell-1)}},
\end{equation}
where the product is over all odd prime numbers $\ell \mid n$ (2007, \cite{Ci}).

\end{enumerate}

\section{Generalized Paley graphs and the Waring's problem}
If $G$ is a group and $S$ is a subset of $G$ not containing $0$, the Cayley graph $\Gamma = X(G,S)$ is the digraph with 
vertex set $G$ and where two vertices $u,v$ form a directed edge from $v$ to $u$ in $\Gamma$ if and only if $v-u \in S$. 
If $S$ is symmetric ($S=-S$), then $X(G,S)$ is a simple (undirected) graph. 

Let $q=p^m$ with $p$ a prime number and $k$ a non-negative integer with $k\mid q-1$. The \textit{generalized Paley graph} 
(or GP-graph for short) is the Cayley graph
\begin{equation} \label{Gammas}
\G(k,q) = X(\ff_{q},R_{k}) \quad \text{with } \quad R_{k} = \{ x^{k} : x \in \ff_{q}^*\}.
\end{equation} 
That is, $\G(k,q)$ is the graph with vertex set $\ff_{q}$ and two vertices $u,v \in \ff_{q}$ are neighbors if and only if 
$v-u=x^k$ for some $x\in \ff_{q}^*$. 
These graphs are denoted by $GP(q,\frac{q-1}k)$ in \cite{LP}.

Notice that if $\omega$ is a primitive element of $\ff_{q}$, then $R_{k} = \langle \omega^{k} \rangle$, and this implies that $\G(k,q)$ 
is a $\frac{q-1}{k}$-regular graph. 
Assuming that $k \mid \tfrac{q-1}2$ if $p$ is odd one has that $\G(k,q)$ is a simple graph. 
For $p=2$, the first condition is not necessary since in this case the graph $\G(k,q)$ is always undirected.
When $k=1$ we get the complete graph $\G(1,q)=K_q$ and when $k=2$ we get the classic Paley graph $\Gamma(2,q) = P(q)$. 

\sk

The graph $\G$ is connected if for any pair of 
vertices $u,v$ there is a walk from $u$ to $v$. In this case, we denote by $d(u,v)$ the distance between $u$ and $v$, i.e.\@ the minimum length of a walk from $u$ to $v$.

We collect algebraic, spectral and arithmetic conditions for a GP-graph to be connected.
\begin{lem}\label{equiv conn}
Let $\G(k,q)$ with $q=p^m$ and $n=\frac{q-1}{k}$. The following are equivalent:
\begin{enumerate}[$(a)$]
\item $\G(k,q)$ is connected. \sk 

\item $R_k$ additively generates $\ff_q$. \sk 

\item The Waring number $g(k,q)$ exists. \sk 

\item $n$ is an eigenvalue of $\Gamma$ with multiplicity $1$. \sk 

\item $n$ is a primitive divisor of $q-1$. \sk 

\item $\frac{q-1}{p^a-1}\nmid k$ for all $1\le a\le m-1$. \sk 
\end{enumerate}
\end{lem}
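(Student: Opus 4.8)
The plan is to organize the six conditions around one structural fact and prove everything else as bookkeeping once that fact is in hand. First I would dispatch the easy block (a)$\Leftrightarrow$(b)$\Leftrightarrow$(c). For (a)$\Leftrightarrow$(b) I would invoke the standard principle that a Cayley graph $X(G,S)$ is connected iff $S$ generates $G$: unwinding the definition, a directed walk out of $0$ reaches exactly the elements $-(s_1+\cdots+s_\ell)$ with $s_i\in R_k$, so connectivity says precisely that the additive span of $R_k$ is all of $\ff_q$. For (b)$\Leftrightarrow$(c) I would use the characteristic-$p$ trick: since $p\cdot s=0$ for every $s\in\ff_q$, we have $-s=(p-1)s$, so the additive \emph{semigroup} generated by $R_k$ (which is what "sum of $k$-th powers", and hence $g(k,q)$, sees) already contains $-R_k$ and therefore coincides with the additive \emph{subgroup} $H:=\langle R_k\rangle_+$. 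Thus "$g(k,q)$ exists'' and "$R_k$ additively generates $\ff_q$'' are literally the same statement.

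The single structural lemma I would prove, and which I expect to be the main obstacle (everything else is routine once it is available), is that $H=\langle R_k\rangle_+$ is in fact a \emph{subfield} of $\ff_q$. Since $(\ff_q,+)\cong(\ff_p)^m$, the subgroup $H$ is automatically an $\ff_p$-subspace, so the real content is multiplicative closure. Here I would use that a product of two $k$-th powers is again a $k$-th power: this makes $H$ closed under multiplication by every element of $R_k$, and since $R_k$ additively spans $H$ this upgrades to $H\cdot H\subseteq H$. As $1=1^k\in R_k\subseteq H$, the set $H$ is a finite subring with identity inside a field, hence a subfield $\ff_{p^a}$ with $a\mid m$; equivalently, $H$ is the smallest subfield whose multiplicative group contains $R_k=\langle\omega^k\rangle$.

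With the subfield lemma in place, (b)$\Leftrightarrow$(f) and (b)$\Leftrightarrow$(d) are short. For (f): given $a\mid m$, one has $R_k\subseteq\ff_{p^a}^*$ iff the order-$n$ group $\langle\omega^k\rangle$ lies inside $\ff_{p^a}^*=\langle\omega^{(q-1)/(p^a-1)}\rangle$, i.e.\ iff $\tfrac{q-1}{p^a-1}\mid k$, while for $a\nmid m$ the quantity $\tfrac{q-1}{p^a-1}$ is not an integer and the divisibility is vacuous. Hence "$H=\ff_q$'' (no proper subfield swallows $R_k$) is exactly condition (f). For (d) I would compute the spectrum via additive characters $\psi$ of $\ff_q$: the eigenvalues are $\sum_{s\in R_k}\psi(s)$, and such a sum of $n$ unit-modulus terms equals $n=|R_k|$ iff $\psi\equiv1$ on $R_k$, i.e.\ iff $\psi$ is trivial on $H$. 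The number of such characters is $[\ff_q:H]=q/|H|$, so the multiplicity of the eigenvalue $n$ is $q/|H|$, which is $1$ exactly when $H=\ff_q$.

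Finally, (e)$\Leftrightarrow$(f) is pure number theory. Writing $d=\mathrm{ord}_n(p)$ (note $\gcd(n,p)=1$ since $n\mid q-1$), the relation $n\mid p^a-1$ is equivalent to $d\mid a$, and $n\mid p^m-1$ forces $d\mid m$. Condition (e), that $n$ be a primitive divisor of $p^m-1$, says $n\nmid p^a-1$ for all $1\le a<m$, i.e.\ $d=m$. I would then note that $d=m$ is equivalent to "$n\nmid p^a-1$ for every \emph{proper divisor} $a\mid m$'': if $n\mid p^a-1$ for some $a<m$, then $d\mid a<m$, so $d$ itself is a proper divisor of $m$ with $n\mid p^d-1$. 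By the preceding paragraph this last condition is exactly (f). The only subtlety worth flagging is the vacuous-divisibility convention for $a\nmid m$, which is precisely what reconciles the "all $1\le a\le m-1$'' range appearing in (f) with the "proper divisors of $m$'' that the subfield argument naturally produces.
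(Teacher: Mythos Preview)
Your argument is correct. The paper's own proof is essentially a sequence of one-line citations (Cayley-graph folklore for (a)$\Leftrightarrow$(b), ``clear from definitions'' for (b)$\Leftrightarrow$(c), generic spectral graph theory for (a)$\Leftrightarrow$(d), ``follows by the definition of GP-graphs'' for (a)$\Leftrightarrow$(e), and ``straightforward'' for (e)$\Leftrightarrow$(f)), so the logical skeleton is the same but no mechanism is exposed. Your route is organized differently and more explicitly: you identify a single structural lemma, that $H=\langle R_k\rangle_+$ is a subfield of $\ff_q$, and then read off (b)$\Leftrightarrow$(d) and (b)$\Leftrightarrow$(f) directly from it (the multiplicity of $n$ is $q/|H|$; the proper subfields containing $R_k$ are exactly those $\ff_{p^a}$ with $\tfrac{q-1}{p^a-1}\mid k$). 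This buys you a self-contained proof that also explains \emph{why} the spectral and arithmetic conditions coincide, whereas the paper treats them as unrelated black boxes. Two small points worth recording for completeness: your handling of (b)$\Leftrightarrow$(c) via $-s=(p-1)s$ is the actual content behind the paper's ``clear from definitions'', and your remark that $\tfrac{q-1}{p^a-1}\notin\Z$ for $a\nmid m$ is what makes the quantifier ``all $1\le a\le m-1$'' in (f) match the ``all proper divisors of $m$'' that the subfield picture naturally gives.
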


Note that $(e)$ says that $g(k,p)$ always exists for $k\mid p-1$ and $p$ prime.

\begin{proof}
The equivalence ($a$) $\Leftrightarrow$ ($b$) is known for general Cayley graphs and ($b$) $\Leftrightarrow$ ($c$) is clear from
the definitions. From spectral graph theory one knows that ($a$) $\Leftrightarrow$ ($d$). 
Recall that $n$ 
is a primitive divisor of $p^m-1$ if $n$ does not divide $p^t-1$ for every $t < m$. 
Finally,  
($a$) $\Leftrightarrow$ ($e$) follows by the definition of GP-graphs and ($e$) $\Leftrightarrow$ ($f$) is straightforward.
\end{proof}

Recall that a walk of length $s$ from $u$ to $v$ in $\Gamma$ is a sequence of edges $e_1,\ldots,e_s$ of $\Gamma$ such that 
$e_1=u v_1$, $e_{2}=v_{1}v_2, \ldots, e_{i}=v_{i-1}v_i$, $\ldots, e_{s}=v_{s-1}v$.

\begin{lem} \label{swalks}
There is a walk of length $s$ in $\G(k,q)$ from $u$ to $v$ if and only if there exist $x_1,\ldots,x_s \in \ff_q^*$ such that 
$v-u = x_1^k +\cdots + x_s^k$.
\end{lem}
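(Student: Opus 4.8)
The plan is to unwind the definition of a walk in a Cayley graph and reduce the claim to a telescoping identity. Recall that in $\G(k,q) = X(\ff_q, R_k)$ two vertices $a,b$ are adjacent precisely when $b-a \in R_k$, that is, when $b-a = x^k$ for some $x \in \ff_q^*$; since $R_k$ is symmetric under the standing hypotheses, the orientation of edges plays no role. By definition a walk of length $s$ from $u$ to $v$ is determined by its vertex sequence $u = w_0, w_1, \ldots, w_{s-1}, w_s = v$ with $w_i - w_{i-1} \in R_k$ for each $i = 1, \ldots, s$.

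For the forward implication I would begin from such a walk and, for each $i$, choose $x_i \in \ff_q^*$ with $w_i - w_{i-1} = x_i^k$, which is possible exactly because consecutive vertices are adjacent. Summing these $s$ differences telescopes:
\begin{equation*}
x_1^k + \cdots + x_s^k = \sum_{i=1}^s (w_i - w_{i-1}) = w_s - w_0 = v - u,
\end{equation*}
which yields the desired representation. For the converse, given $v - u = x_1^k + \cdots + x_s^k$ with all $x_i \in \ff_q^*$, I would define the partial sums $w_0 = u$ and $w_i = u + x_1^k + \cdots + x_i^k$ for $1 \le i \le s$. Then $w_s = u + (v-u) = v$, and each step satisfies $w_i - w_{i-1} = x_i^k \in R_k$, so $w_0, w_1, \ldots, w_s$ is a walk of length $s$ from $u$ to $v$.

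There is no substantive obstacle here: the statement is a direct dictionary between walks in $\G(k,q)$ and expressions of $v-u$ as sums of $s$ nonzero $k$-th powers. The only points needing a word of care are matching the edge-orientation convention (harmless since $R_k = -R_k$) and observing that the requirement $x_i \in \ff_q^*$ is automatic, as every element of $R_k$ is by definition a nonzero $k$-th power. If one prefers, the same argument can be recast as an induction on $s$, the inductive step appending a single edge corresponding to one additional summand $x_s^k$; but the telescoping formulation above is the most transparent.
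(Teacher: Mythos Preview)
Your proof is correct and is essentially the same as the paper's: both unwind the definition of an edge in $\G(k,q)$ and telescope the successive differences $w_i-w_{i-1}$. The only cosmetic difference is that the paper writes out the case $s=2$ explicitly and then invokes induction, whereas you give the direct partial-sum construction for general $s$ (and note yourself that the two formulations are interchangeable).
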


\begin{proof}
If $uw$ and $wv$ are edges of $\G(k,p)$, then $v-w=x_1^k$ and $w-u=x_2^k$ for some $x_1,x_2 \in \ff_{q}^n$. Thus, 
$v-u =(v-w)+(w-u)= x_1^k + x_2^k$. Conversely, given $v-u = x_1^k + x_2^k$ we take $w=u+x_2^k$. Clearly $w$ is a vertex, $uw$ and $wv$ are edges of $\G$ and hence there is a 2-walk from $u$ to $v$. The general result follows by induction.  
\end{proof}

We now give a general strategy that can be used to compute Waring numbers. 
We will relate the number $g(k,q)$ with the diameter of the graph $\G(k,q)$.
This was previously used by Hamidoune in the case $q=p$ prime (as mentioned by Garc\'ia-Sol\'e in (\cite{GS}).
Recall that the \textit{diameter} of $\Gamma$ is its maximal eccentricity, given by 
$$\delta(\Gamma) =  \max_{u\in \ff_q} \epsilon(u) = \max_{u\in \ff_q} \max_{v\in \ff_q} d(u,v).$$
That is, the diameter of $\Gamma$ is the greatest length between all minimal walks in $\Gamma$.

\begin{thm} \label{gdiam}
If the GP-graph $\G=\G(k,q)$ is connected then 
\begin{equation} \label{g=d} 
g(k,q) = \delta(\G).
\end{equation} 
\end{thm}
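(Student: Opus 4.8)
The plan is to show the two quantities are equal by proving each is bounded by the other, using Lemma~\ref{swalks} as the essential bridge between the additive (Waring) description and the combinatorial (distance) description. Recall that $g(k,q)$ is the least $s$ such that \emph{every} element of $\ff_q$ is a sum of $s$ many $k$-th powers from $R_k$, while $\delta(\G)$ is the maximum over all pairs $(u,v)$ of the distance $d(u,v)$. The key observation is that by translation invariance of the Cayley graph, the distance $d(u,v)$ depends only on the difference $v-u$: indeed $x \mapsto x+w$ is a graph automorphism of $\G(k,q)$ for every $w \in \ff_q$, so $d(u,v)=d(0,v-u)$. Hence the eccentricity of every vertex equals $\epsilon(0)=\max_{w\in\ff_q} d(0,w)$, and therefore $\delta(\G)=\max_{w\in\ff_q} d(0,w)$.

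Next I would translate distances from $0$ into representation lengths. By Lemma~\ref{swalks}, there is a walk of length $s$ from $0$ to $w$ precisely when $w=x_1^k+\cdots+x_s^k$ for some $x_1,\dots,x_s\in\ff_q^*$; so $d(0,w)$ is exactly the \emph{minimal} number of $k$-th powers needed to express $w$. Write $\ell(w)$ for this minimal length. Then $\delta(\G)=\max_{w\in\ff_q}\ell(w)$ is the smallest $s$ such that every $w$ can be written as a sum of \emph{at most} $s$ many $k$-th powers, which is precisely the definition of $g(k,q)$. A small point to handle carefully is the ``at most'' versus ``exactly'' subtlety: a priori $g(k,q)$ asks for a sum of at most $s$ powers while a walk of length $s$ gives a sum of exactly $s$ nonzero powers. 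Since $\G$ is connected (hence $R_k$ additively generates $\ff_q$), $0$ itself lies in the subgroup generated by $R_k$, and one checks that any element writable as a sum of fewer than $s$ powers is also reachable by a walk of length $s$ after padding; the standard way to see this is that $\ell$ is ``monotone reachable,'' i.e. if $w$ is a sum of $t$ powers then it is also a sum of $t'$ powers for suitable larger $t'$, so the two extremal definitions coincide. This reconciliation is the main technical point to get right.

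The main obstacle, then, is exactly this padding/reachability issue: verifying that the maximum distance from $0$ equals the worst-case minimal representation length without an off-by-one discrepancy coming from the distinction between ``at most $s$'' and ``a walk of length exactly $s$.'' Concretely, I expect to argue that for a connected GP-graph one can always lengthen a representation $w=x_1^k+\cdots+x_t^k$ to one with $t+1$ summands (by replacing some $x_i^k$ using a nontrivial relation, or by noting that $0$ is a sum of two equal-and-opposite powers when $-1\in R_k$, and treating the remaining parity case separately); granting this, every $w$ reachable in $\le s$ steps is reachable in a walk whose length realizes $d(0,w)$, and the clean equality $g(k,q)=\max_w d(0,w)=\delta(\G)$ follows. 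Once the translation invariance and this monotonicity are in place, assembling $g(k,q)=\delta(\G)$ is immediate.
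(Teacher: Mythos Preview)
Your approach is correct and is essentially the paper's: reduce by translation invariance to $\delta(\G)=\max_{w}d(0,w)$, then identify $d(0,w)$ via Lemma~\ref{swalks} with the minimal number of nonzero $k$-th powers needed to represent $w$. The paper reaches $\delta(\G)=\max\{d(0,c):c\in\ff_q^*\}$ by a short contradiction argument rather than by invoking the Cayley-graph automorphism $x\mapsto x+w$, but the content is identical.

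Your discussion of the ``at most versus exactly'' issue is overcomplicated and slightly off. Under the interpretation you yourself state --- $g(k,q)$ is the least $s$ such that every element is a sum of \emph{at most} $s$ $k$-th powers --- there is nothing to reconcile: $g(k,q)=\max_w d(0,w)$ immediately, and you are done. The paper, which phrases the definition as a sum of exactly $s$ $k$-th powers, handles this with the trivial padding $y_{s'+1}=\cdots=y_\delta=0$ (using $0^k=0$); no appeal to $-1\in R_k$ or to nontrivial additive relations is needed. In fact your proposed device of lengthening a representation in \emph{nonzero} powers by exactly one summand can fail: in $\ff_5$ with $k=2$ the nonzero squares are $\{1,4\}$, and $1$ is a single nonzero square but is not a sum of two nonzero squares. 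So drop that detour and either pad with $0^k$ as the paper does, or simply note that under the ``at most'' reading the equality is immediate.
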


\begin{proof}
We first show that the diameter can be realized with paths starting from $0$.
Let $u,v$ be vertices of $\G$ such that $d(u,v)=\delta(\G)=\delta$. 
Clearly, $\delta \ge d(0,c)$ for all $c\in\ff_{q}^*$. In particular, we have that $\delta\ge d(v-u,0)$. 
Assume that $\delta > d(v-u,0)=t$. Then, there is a sequence $x_{1}, \ldots, x_{t} \in \ff_{q}^*$ such that 
$v-u=x_{1}^k+\cdots+x_{t}^k$, by Lemma \ref{swalks}. This induces a walk from $u$ to $v$, i.e.\@ $d(u,v) \le t < \delta = d(u,v)$, which is absurd. Therefore $\delta = d(0,v-u)$, and we have that 
\begin{equation}\label{diam gama}
\delta(\G) = \max\{d(0,c): c\in \ff_{q}^*\}.
\end{equation}

Notice that every element of $\ff_{q}^*$ can be written as a sum of $\delta$ $k$-th powers. In fact, if 
$c\in\ff_{q}^*$ with $d(0,c) = s'$ then $s' \le \delta$, then there exist $y_{1},\ldots,y_{s'}$ such that 
$y_1^k + \cdots + y_{s'}^k = c$. Defining $y_{s'+1} = \cdots = y_{\delta}=0$ 
we obtain that $y_1^k + \cdots + y_{\delta}^k = c$ as desired.

Now, by \eqref{diam gama}, there exists $a\in\ff_{q}^*$ such that $\delta(\G)=d(0,a)$. 
Clearly, $a$ cannot be written as a sum of less than $\delta$ $k$-th powers, otherwise we obtain a walk from $0$ to $a$
in $\G$ with length less than $d(0,a)$. Therefore, $g(k,q) = \delta(\G)$ as claimed.
\end{proof}

\begin{rem}
($i$) By definition of radius of a graph and by \eqref{g=d} we have 
$$ r(\G(k,q)) = \min\limits_{u\in \ff_q} \, \max_{v\in \ff_q} \, d(u,v) \le g(k,q). $$ 

\noindent ($ii$) 
The previous theorem can be applied whether the graph $\G(k,q)$ is directed or not.
\end{rem}

\section{Exact values for $g(k,q)$ through Hamming GP-graphs}
Here we give a general expression for $g(k,q)$ using GP-graphs which are also Hamming graphs, since they have known diameter. From this result, we will show that every positive integer is the Waring number for a certain pair $(k,q)$. 

A Hamming graph $H(b,q)$ is a graph with vertex set $V=K^b$ where $K$ is any set of size $q$
(typically $\ff_q$ in applications), and where two $b$-tuples form and edge if and only if they differ in exactly one coordinate. 
Clearly, $H(b,q)$ is a connected graph with diameter $b$.

From now on, we will adopt the following notation. For a positive integer $b$ we put  
\begin{equation} \label{Psi b}
\Psi_b (x) = \tfrac{x^b-1}{x-1} = x^{b-1} + \cdots + x +1. 
\end{equation}

\begin{thm} \label{WN g=b}
Let $p$ be a prime and let $a, b$ be positive integers. 
If $b \mid \Psi_b(p^{a})$ then 
\begin{equation} \label{g=b}
g \big( \tfrac{p^{ab}-1}{b(p^{a}-1)}, p^{ab} \big) = g \big(\tfrac 1b \Psi_b(p^{a}), p^{ab} \big) = b.
\end{equation} 
\end{thm}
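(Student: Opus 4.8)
The plan is to identify the generalized Paley graph attached to the distinguished exponent with a Hamming graph and then read off the Waring number from Theorem \ref{gdiam}. First note that the two arguments of $g$ in \eqref{g=b} are literally equal, since $\Psi_b(p^a)=\frac{p^{ab}-1}{p^a-1}$ gives $\frac1b\Psi_b(p^a)=\frac{p^{ab}-1}{b(p^a-1)}=:k$; moreover the hypothesis $b\mid\Psi_b(p^a)$ is exactly what makes $k$ a positive integer, and since $(p^a-1)\mid(p^{ab}-1)$ we then have $k\mid p^{ab}-1$, so $\G:=\G(k,p^{ab})$ is a well-defined GP-graph on $q=p^{ab}$ vertices. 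By Theorem \ref{gdiam} it suffices to prove that $\G$ is connected with $\delta(\G)=b$; I will obtain both at once by showing $\G\cong H(b,p^a)$, a graph that is connected of diameter exactly $b$.

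For the identification, view $\ff_{p^{ab}}$ as a $b$-dimensional vector space over $\ff_{p^a}$, matching the vertex set $\ff_{p^a}^b$ of $H(b,p^a)$. The degrees already agree: $\G$ is $\frac{p^{ab}-1}{k}$-regular and $\frac{p^{ab}-1}{k}=b(p^a-1)$, which is precisely the degree of $H(b,p^a)$. Next, writing $R_k$ and $\ff_{p^a}^*$ as the subgroups of the cyclic group $\ff_{p^{ab}}^*$ of orders $b(p^a-1)$ and $p^a-1$ respectively, the divisibility $(p^a-1)\mid b(p^a-1)$ forces $\ff_{p^a}^*\subseteq R_k$. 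In particular $-1\in\ff_{p^a}^*\subseteq R_k$, so $R_k$ is symmetric and $\G$ is a genuine (undirected) graph. Since $[R_k:\ff_{p^a}^*]=b$, the set $R_k$ is a disjoint union of $b$ cosets $g_1\ff_{p^a}^*,\dots,g_b\ff_{p^a}^*$, and adjoining $0$ turns each punctured coset into a one-dimensional $\ff_{p^a}$-subspace $g_j\ff_{p^a}$, so that $R_k\cup\{0\}=\bigcup_{j=1}^{b}g_j\ff_{p^a}$.

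The crux is now the following: $\G\cong H(b,p^a)$ if and only if the lines $g_1\ff_{p^a},\dots,g_b\ff_{p^a}$ are linearly independent over $\ff_{p^a}$, i.e.\ form a direct-sum decomposition $\ff_{p^{ab}}=g_1\ff_{p^a}\oplus\cdots\oplus g_b\ff_{p^a}$. Indeed, if they do, the $\ff_{p^a}$-linear isomorphism sending $g_j$ to the $j$-th standard basis vector carries $R_k=\bigcup_j g_j\ff_{p^a}^*$ exactly onto the weight-one vectors, which is the connection set of $H(b,p^a)$; conversely any Cayley-graph isomorphism onto $H(b,p^a)$ produces such a basis. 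Fixing a primitive element $\omega$ of $\ff_{p^{ab}}$ one has $\ff_{p^a}^*=\langle\omega^{\Psi_b(p^a)}\rangle$ and $R_k=\langle\omega^{k}\rangle$, so one may take $g_j=\omega^{(j-1)k}$ and the whole question reduces to the $\ff_{p^a}$-linear independence of $\{1,\omega^{k},\dots,\omega^{(b-1)k}\}$. This independence — equivalently, the statement that $\G(k,p^{ab})$ is a Hamming graph precisely when $b\mid\Psi_b(p^a)$ — is exactly the characterization of Hamming GP-graphs proved by Lim and Praeger \cite{LP}, which I would invoke directly; proving it from scratch (the genuine obstacle here) amounts to controlling the $\ff_{p^a}$-span of these equally spaced powers of $\omega$.

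Granting $\G\cong H(b,p^a)$, connectivity of Hamming graphs together with Lemma \ref{equiv conn} gives that $g(k,p^{ab})$ exists, and $\delta(\G)=\delta(H(b,p^a))=b$. Theorem \ref{gdiam} then yields $g(k,p^{ab})=\delta(\G)=b$, which combined with the identity $k=\frac1b\Psi_b(p^a)=\frac{p^{ab}-1}{b(p^a-1)}$ is exactly \eqref{g=b}.
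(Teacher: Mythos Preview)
Your proposal is correct and follows essentially the same route as the paper: both arguments reduce to the identification $\G(k,p^{ab})\cong H(b,p^a)$ via the Lim--Praeger characterization \cite{LP} and then invoke Theorem~\ref{gdiam} to conclude $g(k,p^{ab})=\delta(H(b,p^a))=b$. You unpack more of the structure behind the Hamming identification (the coset decomposition $R_k=\bigcup_j g_j\ff_{p^a}^*$ and the linear-independence criterion), but you still defer the decisive step to \cite{LP}, exactly as the paper does; the paper simply cites the result more tersely and handles the trivial case $b=1$ separately.
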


\begin{proof}
Let $m=ab$. If $b=1$, we clearly have $g(1,p^m)=1$ and hence \eqref{g=b} holds. Thus, suppose $b>1$.
Note that $\tfrac{p^m-1}{p^{a}-1} = \Psi_b(p^{a})$.
Hence, $k=\tfrac{p^m-1}{b(p^{a}-1)}$ is an integer if and only if $b \mid \Psi_b(p^{a})$. 

Now, those GP-graphs which are Hamming graphs are characterized in \cite{LP}. 
In fact, $\G(\frac{p^{m}-1}n,p^m)$ is Hamming if and only if 
$$n = b(p^{a}-1)$$ 
for some $b\mid m$ with $b>1$. 
In this case we have 
$$\G(\tfrac 1b \Psi_b(p^a), p^{ab}) = \G(\tfrac{p^{m}-1}n, p^m) \cong H(b,p^{a})$$ 
(see the proof of Theorem 4.1 (2) in \cite{LP}, \S 3) and therefore, 
by Theorem \ref{gdiam}, we have 
$$g \big( \tfrac 1b \Psi_b(p^a),\,p^m \big) = \delta \big(\G(\tfrac 1b \Psi_b(p^a), p^m) \big) = 
\delta (H(b, p^{a})) = b,$$
as we wanted to see. 
\end{proof}

If $b$ is prime, $\Psi_b(x)$ equals the $b^{th}$ cyclotomic polynomial $\Phi_b(x)$. In general, we have 
\begin{equation} \label{Psi factors} 
\Psi_b(x) = \prod_{d\mid b, \, d > 1} \Phi_d(x).
\end{equation} 

We now give some easy sufficient conditions assuring that $\tfrac 1b \Psi_b(x)$ is an integer.
\begin{lem} \label{lema conds}
	Let $x$ and $b$ be integers. Thus, we have:
	\begin{enumerate}[$(a)$]
		\item If $x \equiv 1 \pmod b$ or else $b$ is even and $x \equiv -1 \pmod b$ then $b \mid \Psi_b(x)$. \sk
		
		\item If every prime divisor $b$ divides $\Phi_d(x)$ for some $d\mid b$ with $d>1$ then $b \mid \Psi_b(x)$.
	\end{enumerate}
\end{lem}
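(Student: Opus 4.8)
The plan is to prove the two parts by different means: part (a) by a one-line congruence computation, and part (b) by an $\ell$-adic valuation argument carried out prime by prime.

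For part (a) I would reduce $\Psi_b(x) = x^{b-1} + \cdots + x + 1$ directly modulo $b$. If $x \equiv 1 \pmod b$, then each of the $b$ terms is $\equiv 1$, so $\Psi_b(x) \equiv b \equiv 0 \pmod b$. If $b$ is even and $x \equiv -1 \pmod b$, then $\Psi_b(x) \equiv \sum_{i=0}^{b-1} (-1)^i \pmod b$, and this alternating sum over an even number of terms is $0$. In both cases $b \mid \Psi_b(x)$, and no further machinery is required.

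For part (b) I would show that $v_\ell(\Psi_b(x)) \geq v_\ell(b)$ for every prime $\ell \mid b$, where $v_\ell$ denotes the $\ell$-adic valuation; summing over all such $\ell$ then gives $b \mid \Psi_b(x)$. Fix $\ell \mid b$ and use $\Psi_b(x) = \tfrac{x^b - 1}{x-1}$, so that $v_\ell(\Psi_b(x)) = v_\ell(x^b - 1) - v_\ell(x - 1)$ once $\ell \nmid x$; and the hypothesis, that $\ell \mid \Phi_d(x)$ for some $d \mid b$ with $d > 1$, indeed forces $\ell \nmid x$, since $\Phi_d(0) = 1$ for $d > 1$. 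Writing $e = \operatorname{ord}_\ell(x)$, the crucial consequence of the hypothesis is that $\Phi_d(x) \mid x^d - 1$ gives $e \mid d \mid b$; in particular $e \mid b$, so $\ell \mid x^b - 1$. The point of the hypothesis is precisely to secure this divisibility $e \mid b$, which is exactly what could fail otherwise.

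With $e \mid b$ established, I would appeal to the lifting-the-exponent lemma. For odd $\ell$: if $e = 1$ then $v_\ell(x^b - 1) = v_\ell(x-1) + v_\ell(b)$ and hence $v_\ell(\Psi_b(x)) = v_\ell(b)$; if $e > 1$, applying the lemma to $y = x^e \equiv 1 \pmod \ell$ gives $v_\ell(x^b - 1) = v_\ell(x^e - 1) + v_\ell(b/e)$, and since $e \mid \ell - 1$ forces $\ell \nmid e$ while $v_\ell(x-1) = 0$, we get $v_\ell(\Psi_b(x)) = v_\ell(x^e - 1) + v_\ell(b) \geq v_\ell(b)$. The main obstacle is the prime $\ell = 2$, where lifting the exponent has a different form: here the hypothesis forces $x$ odd (so $e = 1$), and for even $b$ one has $v_2(x^b - 1) = v_2(x-1) + v_2(x+1) + v_2(b) - 1$, whence $v_2(\Psi_b(x)) = v_2(x+1) + v_2(b) - 1 \geq v_2(b)$ because $x + 1$ is even. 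Assembling these cases yields $v_\ell(\Psi_b(x)) \geq v_\ell(b)$ for all $\ell \mid b$ and completes the argument.
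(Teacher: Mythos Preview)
Your proof of part~(a) is correct and essentially the same as the paper's: both reduce $\Psi_b(x)$ term by term modulo~$b$.

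For part~(b) your route is genuinely different. The paper's argument is a single line: it invokes the factorization $\Psi_b(x)=\prod_{d\mid b,\ d>1}\Phi_d(x)$ (displayed just before the lemma) and declares that (b) ``follows directly''. Read literally, that factorization together with the hypothesis only yields that every prime $\ell\mid b$ divides $\Psi_b(x)$, i.e.\ $\operatorname{rad}(b)\mid\Psi_b(x)$; this is the full conclusion when $b$ is squarefree, but for general $b$ the passage to higher prime powers is left implicit. Your $\ell$-adic valuation argument via lifting-the-exponent addresses precisely this point: by showing $v_\ell(\Psi_b(x))\ge v_\ell(b)$ for each prime $\ell\mid b$ (with the necessary separate treatment of $\ell=2$), you obtain $b\mid\Psi_b(x)$ for arbitrary $b$ without appeal to any further facts about cyclotomic values. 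The trade-off is clear: the paper's approach is much shorter but leans on unstated properties of $\Phi_d$ modulo primes, while yours is longer but fully self-contained and in fact computes $v_\ell(\Psi_b(x))$ exactly in each case. One cosmetic remark: your identity $v_\ell(\Psi_b(x))=v_\ell(x^b-1)-v_\ell(x-1)$ tacitly assumes $x\neq 1$, but that edge case is trivial since $\Psi_b(1)=b$.
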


\begin{proof}
For ($a$), since $x\equiv 1 \pmod b$ we have 
	$\Psi_b(x)=x^{b-1} + \cdots  + x + 1\equiv b \equiv 0 \pmod b$, as we wanted. 
	Similarly, if $x\equiv -1 \pmod b$ we have 
	$x^{b-1} + \cdots + x + 1 \equiv (-1+1)+ \cdots +(-1+1) \equiv 0 \pmod b$, 
	provided that $b$ is even. 
	Item ($b$) follows directly from \eqref{Psi b} and \eqref{Psi factors}.
\end{proof}

As a direct consequence of Theorem \ref{WN g=b} and Lemma \ref{lema conds}, we have the following conditions ensuring that \eqref{g=b} holds.
\begin{coro} \label{conditions}
Let $p$ be prime and let $a$ and $b$ be positive integers. 
\begin{enumerate}[$(a)$]
\item If $p^a \equiv 1 \pmod b$ or else $b$ is even and $p^a \equiv -1 \pmod b$ then \eqref{g=b} holds. \sk

\item If every prime divisor of $b$ divides $\Phi_d(p^a)$ for some $d\mid b$ with $d>1$ then \eqref{g=b} holds.
\end{enumerate}
\end{coro}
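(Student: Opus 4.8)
The plan is to chain the two preceding results together, specializing Lemma \ref{lema conds} to the value $x = p^a$ and then invoking Theorem \ref{WN g=b}. Both parts of the corollary share the same mechanism: each stated hypothesis is exactly one of the sufficient conditions of Lemma \ref{lema conds} guaranteeing the divisibility $b \mid \Psi_b(p^a)$, and this divisibility is precisely the hypothesis required by Theorem \ref{WN g=b} to conclude that \eqref{g=b} holds. So the corollary is a pure composition of the two earlier statements, with no new content beyond the substitution.

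For part ($a$), I would observe that the assumption ``$p^a \equiv 1 \pmod b$, or else $b$ is even and $p^a \equiv -1 \pmod b$'' is verbatim the hypothesis of Lemma \ref{lema conds}($a$) read with $x = p^a$. That lemma therefore gives $b \mid \Psi_b(p^a)$, which is exactly the input Theorem \ref{WN g=b} needs, and applying it yields the desired equality \eqref{g=b}. Part ($b$) is identical in structure: the hypothesis that every prime divisor of $b$ divides $\Phi_d(p^a)$ for some $d \mid b$ with $d > 1$ is precisely the hypothesis of Lemma \ref{lema conds}($b$) with $x = p^a$, again producing $b \mid \Psi_b(p^a)$, whence Theorem \ref{WN g=b} delivers \eqref{g=b}.

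Since every step is a direct appeal to an already-proved result, there is no genuine obstacle. The only point deserving a moment of care is that $p^a$ is a legitimate value to substitute for the integer variable $x$ in Lemma \ref{lema conds}; this is immediate, as the lemma is stated for arbitrary integers $x$ and $p^a$ is a positive integer. One may also note, for completeness, that whenever the hypotheses hold the quantity $\tfrac 1b \Psi_b(p^a)$ is automatically an integer, so the Waring number appearing in \eqref{g=b} is well defined, consistently with the discussion preceding Theorem \ref{WN g=b}.
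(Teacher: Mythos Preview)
Your proposal is correct and matches the paper's approach exactly: the paper simply states that the corollary is a direct consequence of Theorem \ref{WN g=b} and Lemma \ref{lema conds}, which is precisely the composition you describe.
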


Note that if $p$ is a prime and $a,b $ are positive integers, then 
$b$ does not necessarily divides $\Psi_b(p^{a})$, as the next example shows. 

\begin{exam}
Let $p=2$ and $m=12$. The positive divisors of $12$ are $\{1, 2, 3, 4, 6, 12\}$.
Then, $b=2,4,6$ and $12$ 
do not divide $\Psi_b(2^{\frac{12}{b}}) = \frac{2^{12}-1}{2^{\frac{12}{b}}-1}$ since $b \nmid 2^{12}-1$, the last number being odd. 
Thus, we cannot apply Theorem \ref{WN g=b} in the case when $b$ is even. 
However, since $b=3$ divides $\Psi_3(2^3) = \frac{2^{12}-1}{2^{4}-1}=2^8+2^4+1=273$, 
by Theorem~\ref{WN g=b} we get 
$$g(91, 4{.}096) = g(\tfrac{2^8+2^4+1}3, 2^{12}) = 3.$$ 

Similarly, taking $m=6$, the only divisor of $2^6-1 =7\cdot 9$ of the form $b(2^{a}-1)$ is 
$3(2^{2}-1)=9$ which by \eqref{g=b} gives 
$g(7,64) = g(\tfrac{2^4+2^2+1}3 , 2^6) = 3$. 
\end{exam}

We now show that \eqref{g=b} can also hold for $p^a \equiv c \pmod b$ with $c\ne \pm 1$ (see ($a$) in Corollary~\ref{conditions}).

\begin{exam} \label{pow exam}
Let $b=9$, $m=9a$ for some $a$ and $p$ any prime such that $p^a \equiv 7 \pmod 9$. Let $x=p^a$. 
Since the order of $x$ modulo $9$ is $3$, we have 
$$ \Psi_9(x)= x^8 +\cdots + x^1 +1 \equiv 3 (x^2 + x^1 + x^0) \equiv 3(49+7+1) = 3\cdot 57 \equiv 0 \pmod 9.$$ 
By Theorem \ref{WN g=b}, we obtain that
$$g (\tfrac 19 \Psi_9(x), p^{9a}) = g (\tfrac{p^{8a}+ \cdots + p^a + 1}{9}, p^{9a}) = 9 $$
for any $p^a  \equiv 7 \pmod 9$.
For instance, if we take $p=7$ and $a=1$, we have  
$$g(747{.}289, 40{.}353{.}607) = 
g (\tfrac{7^9-1}{9(7-1)}, 7^9) = 9.$$
Also, we can take $p=11$ and $a=4$, since $11^4\equiv 7 \pmod 9$. Hence we have
\begin{align*}
\tfrac{11^{36}-1}{90} & = 343{.}474{.}228{.}143{.}007{.}473{.}729{.}703{.}921{.}520{.}971{.}704, \\ 
11^{36} & =30{.}912{.}680{.}532{.}870{.}672{.}635{.}673{.}352{.}936{.}887{.}453{.}361.
\end{align*}
and $g (\tfrac{11^{36}-1}{90}, 11^{36}) = 9$.
\end{exam}

\subsection*{The Waring function}
We now define the Waring function from Waring pairs. 
\begin{defi}
We say that a pair of positive integers $(k,q)$, such that $q$ is a prime power and $k\mid q-1$, is a \textit{Waring pair} if $g(k,q)$ exists. We denote by $\mathbb{W}$ the set of all such pairs. 
Consider the \textit{Waring function} sending every Waring pair to the corresponding Waring number, i.e.\@
\begin{equation} \label{g-function}
g: \mathbb{W} \subset \N \times \N \rightarrow \N, \qquad (k,q) \mapsto g(k,q).
\end{equation}
\end{defi}

We now show that $g$ is surjective.
That is, every positive integer number is the Waring number $g(k,q)$ for some pair $(k,q)$ in some (generically non-prime) 
finite field. 

\begin{prop} \label{coro sobre}
Let $b \in \N$. Then, there exist $k \in \N$ and a prime power $q=p^m$ for some $m \ge 1$ 
$(m>1$ if $b>1$$)$ such that $g(k,q)=b$. Moreover, if $b$ is odd one can take 
$k = \frac{2^{b\varphi(b)}-1}{b(2^{\varphi(b)}-1)}$ and $q=2^{b\varphi(b)}$, that is  
\begin{equation} \label{gkq=b} 
g \big(\tfrac{2^{b\varphi(b)}-1}{b(2^{\varphi(b)}-1)}, 2^{b\varphi(b)} \big) = b.
\end{equation}
\end{prop}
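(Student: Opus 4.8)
The plan is to prove surjectivity of the Waring function $g$ by exhibiting, for each target value $b \in \N$, an explicit Waring pair $(k,q)$ with $g(k,q)=b$. The natural tool is Theorem~\ref{WN g=b}, which says that whenever $b \mid \Psi_b(p^a)$ one has $g(\tfrac 1b \Psi_b(p^a), p^{ab}) = b$. So the entire task reduces to a \emph{divisibility engineering} problem: for each $b$, find a prime $p$ and an exponent $a$ such that $b \mid \Psi_b(p^a)$. The case $b=1$ is trivial, since $g(1,q)=1$ for the complete graph $\G(1,q)=K_q$ (take $q=p$ any prime), so I would dispose of it first and assume $b>1$ throughout.

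For the main case I would split according to the parity of $b$, guided by part~($a$) of Corollary~\ref{conditions}, which tells us that \eqref{g=b} holds whenever $p^a \equiv 1 \pmod b$ (any $b$) or $p^a \equiv -1 \pmod b$ (even $b$). For \textbf{odd} $b$ the statement already prescribes the witness: take $p=2$, $a=\varphi(b)$, so that $k=\tfrac{2^{b\varphi(b)}-1}{b(2^{\varphi(b)}-1)}$ and $q=2^{b\varphi(b)}$. The key step here is to verify $2^{\varphi(b)}\equiv 1 \pmod b$. Since $b$ is odd we have $\gcd(2,b)=1$, so $2$ lies in the unit group $(\Z/b\Z)^*$, and Euler's theorem gives $2^{\varphi(b)}\equiv 1 \pmod b$ directly. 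Thus $p^a = 2^{\varphi(b)} \equiv 1 \pmod b$, Corollary~\ref{conditions}($a$) applies, and \eqref{gkq=b} follows immediately from Theorem~\ref{WN g=b}. I would also note that $m = b\varphi(b) > 1$ here, matching the parenthetical claim in the statement.

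For \textbf{even} $b$ the clean formula with $p=2$ breaks, because $2$ is no longer a unit mod $b$ (indeed Euler's theorem fails, just as the Example with $p=2$, $m=12$ illustrates). So I would instead pick an \emph{odd} prime $p$ coprime to $b$ and invoke Euler's theorem in the group $(\Z/b\Z)^*$ to get $p^{\varphi(b)}\equiv 1 \pmod b$; choosing $a=\varphi(b)$ then yields $p^a \equiv 1 \pmod b$, and Corollary~\ref{conditions}($a$) again gives $g\big(\tfrac 1b\Psi_b(p^{\varphi(b)}),\, p^{b\varphi(b)}\big)=b$. Concretely one can fix the smallest odd prime not dividing $b$ (such a prime always exists), so the witness is fully explicit. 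The existence of an odd prime coprime to $b$ is clear since only finitely many primes divide $b$.

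The only genuine obstacle is the even case: the symmetric, universally-valid choice $p=2$ fails precisely when $b$ is even, which is why the proposition states the explicit formula only for odd $b$. My plan is to handle even $b$ by a non-uniform but still explicit choice of odd prime $p$, as above, accepting that no single prime works for all $b$. Everything else is a direct application of already-established results: Theorem~\ref{WN g=b} does the heavy lifting, Corollary~\ref{conditions}($a$) packages the congruence condition, and Euler's theorem supplies the required congruence. I would close by remarking that in both parities the field is $q=p^{b\varphi(b)}$ with exponent $m=b\varphi(b)$, which exceeds $1$ whenever $b>1$, confirming the assertion that $q$ is generically non-prime.
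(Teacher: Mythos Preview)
Your proposal is correct and follows essentially the same route as the paper: choose a prime $p$ coprime to $b$, use the fact that $p$ is a unit in $\Z_b$ to obtain $p^a\equiv 1\pmod b$ (you take $a=\varphi(b)$ via Euler's theorem, the paper just invokes existence of such $a$), then apply Corollary~\ref{conditions}($a$) and Theorem~\ref{WN g=b}; for odd $b$ both you and the paper specialize to $p=2$, $a=\varphi(b)$. The only difference is organizational---you split by parity, the paper does the general case first and the odd case second---but the content is identical.
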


\begin{proof}
Let $p$ be a prime number that is coprime with $b$. Thus, $p$ is a unit in $\mathbb{Z}_b$. Hence, there exists some positive integer $a$ such that $p^a \equiv 1 \pmod b$. Let $m=ab$ and $k=\tfrac{1}{b}\Psi(p^a)$, which is an integer by Lemma \ref{lema conds}. Hence $k \mid p^m-1$. Thus, taking $q=p^m$, we know that $g(k,q)$ exists and, by Theorem \ref{WN g=b}, we further have that 
$g(k,q)=b$, as desired. 

If $b$ is odd, we choose $p=2$ and $a=\varphi(b)$. By Euler's Theorem $2^{\varphi(b)} \equiv 1 \pmod b$ and then
\eqref{gkq=b} holds by Corollary \ref{conditions}.
\end{proof}

The numbers provided by the proposition grow rapidly. For instance, for the first odd numbers, \eqref{gkq=b} give
\begin{align}
\begin{aligned}
& 3 = g \big( \tfrac{2^6-1}{3(2^2-1)}, 2^{3\cdot 2} \big) = g(7,64), \\
& 5 = g \big( \tfrac{2^{20}-1}{5(2^4-1)}, 2^{5\cdot 4} \big) = g(13{.}981, 1{.}048{.}576), \\
& 7 = g \big( \tfrac{2^{42}-1}{7(2^6-1)}, 2^{7\cdot 6} \big) = g(9{.}972{.}894{.}583, 4{.}398{.}046{.}511{.}104), \\
& 9 = g \big( \tfrac{2^{54}-1}{9(2^6-1)}, 2^{9\cdot 6} \big) = g(31{.}771{.}425{.}942{.}649, 18{.}014{.}398{.}509{.}481{.}984).
\end{aligned}
\end{align}

We close the section with some questions. 
\subsection*{Questions}
Given $b$, does $g$ take the value $b$ infinitely many times?
If this is the case, can we find a tower of fields $\{F_i\}$ and a sequence of integers $k_i$ such that the associated Waring numbers are the same, i.e.\@ $g(k_i, |F_i|)=b$?

\section{Integrality of $\frac{1}{b} \Psi_{b}(x)$}
In this section, we give necessary and sufficient conditions for positive coprime integers $b$ and $x$ to have $\frac 1b \Psi_{b}(x) \in \Z$. 
This will allow us to apply 
Theorem~\ref{WN g=b} in different contexts.
We denote by $ord_b(x)$ the order of $x$ modulo $b$, i.e.\@ 
the least positive integer $s$ such that $x^s \equiv 1 \pmod b$.
We consider the cases when $b$ is squarefree, a prime power or the general case separately.

\subsubsection*{The case $b$ is squarefree}

\begin{lem} \label{x1 mod r}
	Let  $x$ be an integer coprime with $b = r_1\cdots r_\ell$ with $r_1 <  \cdots < r_\ell$ primes. Then,
	\begin{equation}\label{div b Psi}
	b\mid \Psi_b(x) \quad \Leftrightarrow  \quad x\equiv 1 \!\pmod{r_1} \quad \text{and} \quad x^{b/{r_i}} \equiv 1 \! \pmod{r_i}, \quad i=2, \ldots, \ell.
	\end{equation}
In particular, if $r,r'$ are distinct primes, both coprime with $x$, we have the following: 
	\begin{enumerate}[$(a)$]
		\item $r \mid \Psi_r(x)$ if and only if $x \equiv 1 \pmod r$. \sk 
	
		\item If $r$ and $x$ are odd then $2r \mid \Psi_{2r}(x)$ if and only if $x \equiv \pm 1 \pmod r$. \sk 
		
		\item If $r,r'$ are odd and $r \nmid r'-1$ then $rr' \mid \Psi_{rr'}(x)$ if and only if $x\equiv 1 \pmod{rr'}$.
	\end{enumerate}
\end{lem}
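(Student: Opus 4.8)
The plan is to reduce the divisibility to one prime at a time. Since $b=r_1\cdots r_\ell$ is squarefree, the Chinese Remainder Theorem gives $b\mid \Psi_b(x)$ if and only if $r_i\mid \Psi_b(x)$ for every $i$, so I would analyse $\Psi_b(x)$ modulo a single prime $r=r_i$, starting from the identity $(x-1)\Psi_b(x)=x^b-1$. If $x\equiv 1\pmod r$, then $\Psi_b(x)\equiv b\equiv 0\pmod r$ because $r\mid b$, so $r\mid \Psi_b(x)$ holds automatically. If instead $x\not\equiv 1\pmod r$, then $x-1$ is a unit modulo $r$, whence $r\mid \Psi_b(x)$ is equivalent to $x^b\equiv 1\pmod r$, that is, to $ord_r(x)\mid b$.

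The unifying step is to rewrite both cases as a single congruence. Put $d_i=ord_{r_i}(x)$; since $d_i\mid r_i-1$ we have $\gcd(d_i,r_i)=1$, and therefore $d_i\mid b$ if and only if $d_i\mid b/r_i$. Feeding this into the case analysis above (the case $x\equiv 1\pmod{r_i}$, where $d_i=1$, being trivially consistent) yields the symmetric criterion $r_i\mid \Psi_b(x)\iff x^{b/r_i}\equiv 1\pmod{r_i}$ for every $i$. To pass to the asymmetric form \eqref{div b Psi}, I would use that $r_1$ is the smallest prime factor: as $d_1\mid r_1-1$, every prime dividing $d_1$ is strictly smaller than $r_1$, hence coprime to $b/r_1=r_2\cdots r_\ell$, so $d_1\mid b/r_1$ forces $d_1=1$, i.e.\ $x\equiv 1\pmod{r_1}$. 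Conjoining this simplified condition at $r_1$ with the congruences $x^{b/r_i}\equiv 1\pmod{r_i}$ for $i\ge 2$ establishes \eqref{div b Psi}.

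The three corollaries follow by specialization. Part $(a)$ is the case $\ell=1$, where \eqref{div b Psi} reads $r\mid \Psi_r(x)\iff x\equiv 1\pmod r$. For $(b)$, with $b=2r$ the ordered factorization is $2<r$; the condition at the smallest prime $2$ is $x\equiv 1\pmod 2$, which holds since $x$ is odd, and the condition at $r$ is $x^{2r/r}=x^2\equiv 1\pmod r$, equivalent to $x\equiv\pm 1\pmod r$ as $r$ is prime. For $(c)$ I would use the symmetric criterion with $b=rr'$: for each prime $s\in\{r,r'\}$ with cofactor $s'$ the condition reads $x^{s'}\equiv 1\pmod s$, and since $ord_s(x)$ divides $\gcd(s',s-1)$ with $s'$ prime, this congruence is equivalent to $x\equiv 1\pmod s$ precisely when $s'\nmid s-1$; taking $r<r'$, the inequality $r'\nmid r-1$ is automatic, so the single hypothesis $r\nmid r'-1$ makes both congruences equivalent to $x\equiv 1$, giving $x\equiv 1\pmod{rr'}$.

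I expect the only real work to be cleanly bookkeeping the coprimality arguments—specifically the equivalence $d_i\mid b\iff d_i\mid b/r_i$ and the special behaviour of the smallest prime $r_1$—rather than any conceptual difficulty, since the whole mechanism rests on the tension between $ord_{r_i}(x)\mid r_i-1$ and the prime factorization of $b$. The one point to flag in $(c)$ is the role of the ordering: the statement should be read with $r$ the smaller prime, so that $r'\nmid r-1$ is free and $r\nmid r'-1$ is the binding hypothesis, since the simplification can fail when $r>r'$.
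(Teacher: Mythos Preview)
Your proof is correct and follows essentially the same route as the paper's: reduce to one prime $r_i$ at a time via the identity $(x-1)\Psi_b(x)=x^b-1$, then use that $ord_{r_i}(x)\mid r_i-1$ to pass between $x^b\equiv 1$ and $x^{b/r_i}\equiv 1$ modulo $r_i$, with the minimality of $r_1$ forcing $d_1=1$. Your presentation is marginally cleaner in deriving the symmetric criterion $r_i\mid\Psi_b(x)\iff x^{b/r_i}\equiv 1\pmod{r_i}$ uniformly before specializing at $r_1$, and your flag about the implicit ordering $r<r'$ in part~$(c)$ is apt---the paper's proof tacitly assumes it as well.
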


\begin{proof}
Clearly, $b \mid \Psi_b(x)$ if and only if $r_{i} \mid \Psi_{b}(x)$ for all $i=1,\ldots,\ell$.
Thus, we will prove that $r_i \mid \Psi_{b}(x)$ for $i=1,\ldots,\ell$ if and only if $x\equiv 1 \pmod{r_1}$ and 
$x^{b/{r_i}} \equiv 1 \pmod{r_i}$ for $i=2,\ldots,\ell$. 	
	
We first show that $r_{1} \mid \Psi_{b}(x)$ if and only if $x\equiv 1 \pmod{r_1}$. 
Suppose that $x\equiv 1 \pmod{r_1}$. As in the proof of Lemma \ref{lema conds}, we have $\Psi_b(x) \equiv b \pmod{r_1}$. Thus, $r_1 \mid \Psi_{b}(x)$ since $r_1 \mid b$. For the converse, suppose that	$r_{1} \mid \Psi_{b}(x)$ and $x \not\equiv 1 \pmod{r_1}$. Then, we have that
\begin{equation} \label{Psi equiv}
\Psi_{b}(x)(x-1) = x^b-1\equiv x^{b/{r_i}}-1 \pmod{r_i}
\end{equation} 
with $i=1$, since $x^{r_1}\equiv x \pmod{r_1}$ by Fermat's theorem. 
This implies that $x^{b/{r_1}} \equiv 1 \pmod{r_1}$, since $r_1 \mid \Psi_b(x)$. Thus, some prime 
factor $p$ of order $ord_{r_1}(x)$ divides $b/r_1$ and hence $p=r_j$ for some $j=2,\ldots, \ell$. 
On the other hand, $p \le ord_{r_1}(x) \le r_1-1 < r_1 < r_j$ for all $j=2,\ldots,\ell$. This is a contradiction and hence
$x\equiv 1 \pmod{r_1}$.
	
For $i = 2, \ldots, \ell$, we will show that $r_i \mid \Psi_{b}(x)$ if and only if $x^{b/r_i} \equiv 1 \pmod{r_i}$.  
As before, $x\equiv 1 \pmod{r_i}$ implies that $r_i \mid \Psi_{b}(x)$. 
Now, if $x^{b/r_i} \equiv 1 \pmod{r_i}$ and $x \not \equiv 1 \pmod{r_i}$, 
we have that \eqref{Psi equiv} holds for $i=2,\ldots,\ell$.
Thus $\Psi_{b}(x)\equiv 0 \pmod{r_i}$ since $x-1 \not \equiv 0 \pmod{r_i}$ and $\mathbb{Z}_{r_i}$ has no zero divisors.
Now, if $r_i \mid \Psi_{b}(x)$, then $x^{b/r_i}-1 \equiv 0 \pmod{r_i}$, by \eqref{Psi equiv}, as desired.

\smallskip
	
We now check the cases in ($a$)--($c$). Clearly, ($a$) is a consequence of \eqref{div b Psi} taking $h=1$. 
In case ($b$), we have $b=2r$. Thus, $b\mid \Psi_b(x)$ if and only if $x\equiv 1 \pmod 2$ and $x^2\equiv 1 \pmod r$. Since $(x,b)=1$, the statement $x\equiv 1 \pmod 2$ and $x^2\equiv 1 \pmod r$ is equivalent to $x\equiv \pm1 \pmod r$.
Finally, for ($c$), suppose that $b=r_1 r_2$ with $r_1 \nmid r_2-1$. 
By \eqref{div b Psi}, we have that $x^{r_1}\equiv 1 \pmod{r_2}$. Then, $ord_{r_2}(x) \mid r_1$ and this implies that 
$ord_{r_2}(x)=1$ or $ord_{r_2}(x)=r_1$. Notice that $ord_{r_2}(x)$ cannot be $r_1$, since if $ord_{r_2}(x)=r_1$. Hence, 
$r_1\mid r_2-1$ by Lagrange's theorem, which contradicts our assumption. Therefore $ord_{r_2}(x)=1$. That is, $x\equiv 1 \pmod{r_2}$, and by the Chinese remainder's theorem we have that $x\equiv 1 \pmod b$, as desired.
\end{proof}

\subsubsection*{The case $b$ is a prime power}
\begin{lem} \label{Lem r pow}
Let $r$ be a prime and $x \in \N$ such that $(x,r)=1$. For any $t \in\N$ we have
	\begin{equation}\label{div b Psi pow2}
	r^t\mid \Psi_{r^t}(x) \qquad \Leftrightarrow  \qquad ord_{r^t}(x)=r^h \quad 
	\text{ for some } \quad 0\le h \le t-1.
	\end{equation}
\end{lem}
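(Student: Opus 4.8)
The plan is to translate the divisibility $r^t \mid \Psi_{r^t}(x)$ into a statement about the $r$-adic valuation, which I denote $v_r$, and to compute that valuation explicitly. Writing $\Psi_{r^t}(x) = \frac{x^{r^t}-1}{x-1}$ gives
$$v_r\big(\Psi_{r^t}(x)\big) = v_r\big(x^{r^t}-1\big) - v_r(x-1),$$
so $r^t \mid \Psi_{r^t}(x)$ is equivalent to $v_r(x^{r^t}-1) \ge t + v_r(x-1)$.

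First I would reformulate the order condition as a congruence. For odd $r$ the group $(\Z/r^t\Z)^*$ is cyclic of order $r^{t-1}(r-1)$, and since $\gcd(r^{t-1},r-1)=1$ its Sylow $r$-subgroup is exactly the kernel of the reduction $(\Z/r^t\Z)^* \to (\Z/r\Z)^*$, i.e.\ the residues congruent to $1$ modulo $r$. An element has $r$-power order precisely when it lies in this Sylow subgroup, so $ord_{r^t}(x) = r^h$ for some $h$ (automatically $0 \le h \le t-1$) if and only if $x \equiv 1 \pmod r$. For $r=2$ the whole group has order $2^{t-1}$ and $x \equiv 1 \pmod 2$ always holds, so the same conclusion is trivially true. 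Thus the right-hand side of \eqref{div b Psi pow2} is equivalent to $x \equiv 1 \pmod r$, and it remains to prove $r^t \mid \Psi_{r^t}(x) \iff x \equiv 1 \pmod r$.

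For the divisibility I would compute $v_r(x^{r^t}-1)$. If $x \not\equiv 1 \pmod r$ then $ord_r(x) \mid r-1$ is coprime to $r$ and larger than $1$, so $ord_r(x) \nmid r^t$ and hence $x^{r^t} \not\equiv 1 \pmod r$; thus $v_r(x^{r^t}-1)=0=v_r(x-1)$ and $v_r(\Psi_{r^t}(x))=0<t$, so the divisibility fails. If $x \equiv 1 \pmod r$ with $r$ odd, the lifting-the-exponent lemma yields $v_r(x^{r^t}-1)=v_r(x-1)+t$, whence $v_r(\Psi_{r^t}(x))=t$ and $r^t \mid \Psi_{r^t}(x)$ (indeed with exact exponent $t$). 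For $r=2$, where $x$ is odd and thus $x \equiv 1 \pmod 2$, I would use the factorization
$$\Psi_{2^t}(x) = (x+1)\prod_{j=1}^{t-1}\big(x^{2^j}+1\big),$$
a product of $t$ even factors, giving $v_2(\Psi_{2^t}(x)) \ge t$. Combining the two directions establishes the equivalence.

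The main obstacle is the prime $r=2$: the clean odd-prime form of lifting the exponent does not apply, so the factorization above (equivalently, the $2$-adic LTE formula) is needed. A secondary delicate point is the equivalence between \emph{``$ord_{r^t}(x)$ is a power of $r$''} and \emph{``$x \equiv 1 \pmod r$''}, which rests on $ord_r(x) \mid r-1$ being coprime to $r$. If one prefers to avoid quoting LTE, the identity $x^{r^t}-1 = (x^{r^{t-1}}-1)\,\Psi_r(x^{r^{t-1}})$ together with the elementary fact that $\Psi_r(y) \equiv r \pmod{r^2}$ whenever $y \equiv 1 \pmod r$ lets one prove $v_r(x^{r^t}-1)=v_r(x-1)+t$ by induction on $t$.
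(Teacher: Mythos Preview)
Your proof is correct and takes a genuinely different, shorter route than the paper's. The paper argues the two implications separately: for $r^t\mid\Psi_{r^t}(x)\Rightarrow ord_{r^t}(x)=r^h$ it splits the sum $\Psi_{r^t}(x)$ into blocks to derive the recursion $\Psi_{r^t}(x)\equiv \kappa\sum_{j=1}^{ord_{r^t}(x)}x^j+\Psi_{r^{t-1}}(x)\pmod{r^t}$ and multiplies by $x-1$ to force $x^{r^{t-1}}\equiv 1\pmod{r^t}$; for the converse it inducts on $t$, choosing a primitive root $\alpha$ of $(\Z/r^s\Z)^*$ for all $s$, replacing $x$ by a specific $\beta=\alpha^{(r-1)r^{t-1-h}}$ with the same order, and reducing to a smaller $t'$. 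Your argument instead first collapses the right-hand side to the single congruence $x\equiv 1\pmod r$ via the Sylow-$r$ subgroup of $(\Z/r^t\Z)^*$, and then computes $v_r(\Psi_{r^t}(x))$ directly from $v_r(x^{r^t}-1)-v_r(x-1)$ using lifting-the-exponent (or the elementary induction you sketch). This buys a much shorter, more conceptual proof and in fact yields the sharper conclusion $v_r(\Psi_{r^t}(x))=t$ exactly when $x\equiv 1\pmod r$ and $r$ is odd; the paper's argument stays closer to bare congruence manipulations and avoids naming LTE, at the cost of the somewhat intricate primitive-root induction. One small point worth making explicit in your write-up: the valuation identity $v_r(\Psi_{r^t}(x))=v_r(x^{r^t}-1)-v_r(x-1)$ presumes $x\neq 1$, but the case $x=1$ is immediate since $\Psi_{r^t}(1)=r^t$.
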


\begin{proof}
	Suppose that $r^t\mid \Psi_{r^t}(x)$. Since $x$ is coprime with $r$ we have $ord_{r^t}(x)\mid \varphi(r^t)=r^{t-1}(r-1)$ and hence 
	$\kappa=\frac{\varphi(r^t)}{ord_{r^t}(x)}\in \mathbb{Z}$. Thus, we can put $ord_{r^t}(x)=r^{h}s$ for some $s\mid r-1$ and $h\le t-1$. It suffices to show that $s=1$.
	Notice that 
	$$\Psi_{r^t}(x)=1+\sum_{j=1}^{r^t-r^{t-1}}x^j+\sum_{j=r^t-r^{t-1}+1}^{r^t-1}x^j.$$ 
	By modularity we have 
	$$ \sum_{j=1}^{r^t-r^{t-1}}x^j  \equiv \kappa \sum_{j=1}^{ord_{r^t}(x)}x^j \pmod{r^t}
	\qquad \text{and} \qquad 
\sum_{j=r^t-r^{t-1}+1}^{r^t-1}x^j  \equiv \Psi_{r^{t-1}}(x)-1 \pmod{r^t}.$$
	This implies that
	\begin{equation}\label{Psi recurs}
	\Psi_{r^t}(x)\equiv \kappa \sum_{j=1}^{ord_{r^t}(x)}x^j+\Psi_{r^{t-1}}(x)\pmod{r^t}.
	\end{equation}
By hypothesis we have $\Psi_{r^t}(x)\equiv 0 \pmod {r^t}$. 
Thus, multiplying \eqref{Psi recurs} by $x-1$ we get 
$$0 \equiv  \kappa (x-1) \sum_{j=1}^{ord_{r^t}(x)}  x^j + (x-1) \Psi_{r^{t-1}}(x) \equiv x^{r^{t-1}}-1 \pmod{r^t}$$
since  $\sum_{j=1}^{ord_{r^t}(x)} (x-1) x^j = x^{ord_{r^t}(x)+1} -x \equiv 0 \pmod {r^t}$.
In this way we have $x^{r^{t-1}}\equiv 1 \pmod{r^t}$. Therefore, $ord_{r^{t}}(x) \mid r^{t-1}$, that is $s=1$ as desired.

For the converse, assume that $ord_{r^t}(x)=r^h$ for some $h\le t-1$ with $r$ odd. We will do induction on $t$. If $t=1$, the statement follows from ($a$) of Lemma \ref{x1 mod r}.
	Suppose now that $t>1$ such that the statement holds for all $1\le t'<t$. 
	By modularity,
	$$\Psi_{r^t}(x)\equiv r^{t-h}\Psi_{r^h}(x) \pmod{r^t}.$$
	Notice that $x^i$ runs over the group $\langle x\rangle$ when $i$ runs over $\{0,1,\ldots,r^h-1\}$, then we have that
	$$\Psi_{r^h}(x)\equiv\sum_{y\in \langle x\rangle } y \pmod{r^t} .$$ 
Since $\langle x  \rangle$ is cyclic there is a unique subgroup  $\langle x^d  \rangle$ of order 
$\tfrac{\# \langle x  \rangle}{d}$ for each divisor $d$ of $\# \langle x  \rangle$.
Therefore, if we take another element of order $r^h$, then this element also generates $\langle x\rangle$. 
	
	Since $r$ is odd, we can choose an integer $\alpha$ such that $\langle \alpha\rangle= (\mathbb{Z}_{r^s})^*$ for all $s$.
	Now, taking $\beta=\alpha^{(r-1)r^{t-1-h}}$ we have $ord_{r^t}(\beta)=r^h$. Thus, $\langle x\rangle =\langle \beta\rangle$ in $(\mathbb{Z}_{r^t})^*$ and
	$$\Psi_{r^h}(x)\equiv \Psi_{r^h}(\beta) \pmod{r^t} .$$
	It is enough to prove that $r^h\mid \Psi_{r^h}(\beta)$. Taking into account that $t-1-h=h-1-(2h-t)$ and $\alpha$ is a primitive element, we obtain that
	$$ord_{r^h}(\beta)=
	\begin{cases} 
	1 & \qquad \text{if $2h\le t$}, \\[1mm]
	r^{2h-t} & \qquad \text{if $2h>t$}.
	\end{cases} $$ 
	Clearly, if $2h\le t$, then $\beta \equiv 1 \pmod{r^h}$ and thus $r^h\mid \Psi_{r^h}(\beta)$. 
	If $2h>t$, we can take $t'=h$ and $h'=2h-t$. Then, $ord_{r^{t'}}(\beta)=r^{h'}$ with $0\le h'\le t'-1$. By inductive hypothesis we get $r^{h}\mid\Psi_{r^h}(\beta)$, i.e.\@ $r^t\mid \Psi_{r^h}(x)$, as desired. 
	
	The case $r=2$ can be proved by induction in the same way as before by using that if $(x,2)=1$ then by 
	Euler's theorem we have that $ord_{2^s}(x)\mid \varphi(2^s)=2^{s-1}$ for all $s$, that is $ord_{2^s}(x)$ is a power of $2$ for all $s$, and the proof is complete.
\end{proof}

\subsubsection*{The general case, $b$ any positive integer}
\begin{lem}\label{gen case}
Let $x$ be an integer coprime with $b = r_1^{t_1} r_2^{t_2} \cdots r_\ell^{t_\ell}$ with $r_1 < r_2 < \cdots < r_\ell$ primes.
If $ord_{r_{i}^{t_i}}(x)=r_{i}^{h_i}$ with $0\le h_i\le t_{i}-1$ for all $i$, then $b \mid \Psi_{b}(x)$.
\end{lem}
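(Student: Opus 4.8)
The plan is to reduce the divisibility $b \mid \Psi_b(x)$ to the prime-power case already settled in Lemma~\ref{Lem r pow}. Since $b = r_1^{t_1}\cdots r_\ell^{t_\ell}$ is a product of pairwise coprime prime powers, the Chinese remainder theorem tells me it suffices to prove $r_i^{t_i} \mid \Psi_b(x)$ for each fixed index $i$. In this way the whole argument localizes at one prime $r_i$ at a time, and I never have to analyze the interaction between the different primes.

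The key step is a purely algebraic factorization of $\Psi_b$. Writing $b = r_i^{t_i} m_i$ with $m_i = b/r_i^{t_i}$ coprime to $r_i$, I would factor
$$x^b - 1 = (x^{r_i^{t_i}})^{m_i} - 1 = (x^{r_i^{t_i}} - 1)\, \Psi_{m_i}(x^{r_i^{t_i}}) = (x-1)\, \Psi_{r_i^{t_i}}(x)\, \Psi_{m_i}(x^{r_i^{t_i}}),$$
using $y^{m_i}-1 = (y-1)\Psi_{m_i}(y)$ with $y = x^{r_i^{t_i}}$ and then $x^{r_i^{t_i}}-1 = (x-1)\Psi_{r_i^{t_i}}(x)$. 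Dividing by $x-1$ gives the identity
$$\Psi_b(x) = \Psi_{r_i^{t_i}}(x) \cdot \Psi_{m_i}(x^{r_i^{t_i}}),$$
which exhibits $\Psi_{r_i^{t_i}}(x)$ as an integer divisor of $\Psi_b(x)$.

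With this in hand the conclusion is immediate. By hypothesis $ord_{r_i^{t_i}}(x) = r_i^{h_i}$ with $0 \le h_i \le t_i - 1$, so Lemma~\ref{Lem r pow} yields $r_i^{t_i} \mid \Psi_{r_i^{t_i}}(x)$, and hence $r_i^{t_i} \mid \Psi_b(x)$ by the factorization above. Since this holds for every $i$ and the moduli $r_i^{t_i}$ are pairwise coprime, I conclude $b = r_1^{t_1}\cdots r_\ell^{t_\ell} \mid \Psi_b(x)$.

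I do not expect any genuine obstacle here: the content lies entirely in the factorization identity and in the invocation of Lemma~\ref{Lem r pow}. The one point to be careful about is that I only need $r_i^{t_i}$ to divide the \emph{single} factor $\Psi_{r_i^{t_i}}(x)$; I make no claim on $\Psi_{m_i}(x^{r_i^{t_i}})$, which merely has to be an integer, and it is, being a value of the polynomial $\Psi_{m_i}$. If one preferred to avoid explicitly quoting the Chinese remainder theorem, the same conclusion follows from the elementary fact that simultaneous divisibility by pairwise coprime prime powers forces divisibility by their product.
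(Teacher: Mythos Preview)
Your proof is correct and follows essentially the same strategy as the paper: reduce via the Chinese remainder theorem to showing $r_i^{t_i}\mid \Psi_b(x)$ for each $i$, invoke Lemma~\ref{Lem r pow} to get $r_i^{t_i}\mid \Psi_{r_i^{t_i}}(x)$, and then pass from $\Psi_{r_i^{t_i}}(x)$ to $\Psi_b(x)$. The only minor difference is that you use the clean polynomial identity $\Psi_b(x)=\Psi_{r_i^{t_i}}(x)\,\Psi_{b_i}(x^{r_i^{t_i}})$, whereas the paper instead observes the congruence $\Psi_b(x)\equiv b_i\,\Psi_{r_i^{t_i}}(x)\pmod{r_i^{t_i}}$ (which follows from $x^{r_i^{t_i}}\equiv 1$ under the order hypothesis); your route is arguably tidier since it does not re-use the hypothesis for this step.
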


\begin{proof}
		Clearly $b\mid\Psi_{b}(x)$ if and only if $r_{i}^{t_i}\mid \Psi_{b}(x)$ for all $i=1,\ldots,\ell$. 
		By hypothesis, there is some $h_i \in \{1,\ldots,t_i-1\}$ such that $ord_{r_{i}^{t_i}}(x)=r_{i}^{h_i}$ for any $i=1,\ldots,\ell$.
	 The previous proposition implies that $r_{i}^{t_i}\mid \Psi_{r_{i}^{t_i}}(x)$ for each $i$. 
	 Thus, if $b_i = b/r_i^{t_i}$, by modularity we get
		$$\Psi_{b}(x)= \Psi_{b_i r_{i}^{t_i}}(x)\equiv b_i \Psi_{r_{i}^{t_i}}(x)\equiv 0 \pmod{r_{i}^{t_i}}$$
		for every $i$. 
	Therefore $r_{i}^{t_i}\mid \Psi_{b}(x)$ for all $i=1,\ldots,\ell$ and hence $b\mid\Psi_{b}(x)$, as we wanted.
\end{proof}

\section{Exact values of $g(\frac 1b \Psi_{b}(q),q^b)$}
In this section we give sufficient (and in almost all cases necessary) conditions for \eqref{g=b} to hold. 
Then, we will obtain several families of exact values of Waring numbers. 
 
\begin{thm} \label{exact val Hamm}
	Let $a,b$ be positive integers and let $x=p^a$ with $p$ prime. Then, 
		\begin{equation} \label{gkqb}
			g \big( \tfrac{p^{ab}-1}{b(p^{a}-1)}, p^{ab} \big) = g \big(\tfrac 1b \Psi_b(p^{a}), p^{ab} \big) = b	
		\end{equation}
	holds in the following cases:
		\begin{enumerate}[$(a)$]
	\item If $b=r$ is a prime different from $p$ 
	and $x\equiv 1 \pmod r$. \sk 
	 
	\item If $b=2r$ with $r$ an odd prime,  $x$ coprime with $b$ and $x\equiv \pm1 \pmod r$. \sk 

	\item If $b=r r'$ with $r<r'$ odd primes such that $r \nmid r'-1$ and $x\equiv 1 \pmod{rr'}$. \sk 
	 
	\item If $b=r_1 r_2 \cdots r_\ell$ with $r_1 < r_2 < \cdots < r_\ell$ primes different from $p$ with $x\equiv 1 \pmod{r_1}$ and $x^{b/r_i} \equiv 1 \pmod{r_i}$ for $i=2, \ldots,\ell$. \sk 
	 
	\item If $b=r^t$ with $r$ prime such that $ord_{b}(x)=r^h$ for some $0\le h<t$. \sk 
	
	\item If $b = r_1^{t_1}	\cdots r_\ell^{t_\ell}$ with $r_1 < \cdots < r_\ell$ primes different from $p$ where $ord_{r_{i}^{t_i}}(x)=r_{i}^{h_i}$ with  $0\le h_i\le t_{i}-1$ for all $i$.
\end{enumerate}

Conversely, if \eqref{gkqb} holds with $b$ as in one of the items $(a)$--$(e)$ then the condition for $x$ stated in the 
corresponding item holds. 
\end{thm}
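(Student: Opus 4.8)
The plan is to recognize the theorem as a direct packaging of Theorem~\ref{WN g=b} together with the three integrality criteria of Section~5 (Lemmas~\ref{x1 mod r}, \ref{Lem r pow} and \ref{gen case}). The common thread is the identity $\frac{p^{ab}-1}{b(p^a-1)} = \frac 1b \Psi_b(p^a)$, so the first argument of $g$ in \eqref{gkqb} is an integer precisely when $b \mid \Psi_b(p^a)$, and in that case Theorem~\ref{WN g=b} immediately yields $g(\frac 1b \Psi_b(p^a), p^{ab}) = b$. So the whole statement reduces to matching each item's hypothesis to the corresponding divisibility criterion.

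For the sufficiency direction I would treat the six items uniformly. Setting $x = p^a$, in each case the hypothesis that $p$ is different from the $r_i$ (or that $x$ is coprime with $b$, or the congruences of item $(c)$, which already force $\gcd(x,b)=1$) guarantees the coprimality needed below. Then: item $(a)$ is the forward part of Lemma~\ref{x1 mod r}$(a)$; item $(b)$ is Lemma~\ref{x1 mod r}$(b)$, where $x$ is odd because $2 \mid b$ and $\gcd(x,b)=1$; item $(c)$ is Lemma~\ref{x1 mod r}$(c)$; item $(d)$ is the forward implication of the equivalence \eqref{div b Psi}; item $(e)$ is the forward part of Lemma~\ref{Lem r pow}; and item $(f)$ is exactly Lemma~\ref{gen case}. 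In every case I conclude $b \mid \Psi_b(x)$, and then Theorem~\ref{WN g=b} gives \eqref{gkqb}.

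For the converse I would argue that whenever \eqref{gkqb} holds the left-hand argument $\frac 1b \Psi_b(p^a)$ must be a positive integer, since otherwise $g$ is not defined there; hence $b \mid \Psi_b(p^a)$. For the $b$ occurring in items $(a)$--$(e)$ — squarefree in $(a)$--$(d)$, a prime power in $(e)$ — I can now run the opposite, ``only if'' implications, which are genuine biconditionals: Lemma~\ref{x1 mod r}$(a)$--$(c)$ and the equivalence \eqref{div b Psi} for $(a)$--$(d)$, and Lemma~\ref{Lem r pow} for $(e)$. These return exactly the stated congruence conditions on $x$.

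Finally I would remark on why $(f)$ is omitted from the converse: Lemma~\ref{gen case} is established only as a sufficient condition and is not claimed to be an equivalence, so for general composite $b$ the relation $b \mid \Psi_b(x)$ need not force each $ord_{r_i^{t_i}}(x)$ to be a power of $r_i$. I do not expect a genuine obstacle here; the only points demanding care are the bookkeeping of the coprimality hypotheses across the six items, and the observation that in item $(d)$ one must invoke the sharp equivalence \eqref{div b Psi} rather than the coarser Lemma~\ref{gen case}, since the latter would wrongly demand $x \equiv 1 \pmod{r_i}$ for every $i$.
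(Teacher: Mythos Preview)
Your proposal is correct and follows exactly the same approach as the paper: invoke Theorem~\ref{WN g=b} once you know $b \mid \Psi_b(p^a)$, and supply that divisibility via Lemma~\ref{x1 mod r} for items $(a)$--$(d)$, Lemma~\ref{Lem r pow} for $(e)$, and Lemma~\ref{gen case} for $(f)$; the converse then follows from the ``only if'' directions of Lemmas~\ref{x1 mod r} and~\ref{Lem r pow}. Your write-up is in fact more careful than the paper's --- in particular your remarks on the coprimality bookkeeping and on why $(d)$ needs the sharp equivalence \eqref{div b Psi} rather than the coarser Lemma~\ref{gen case} are well observed.
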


\begin{proof}
Clearly ($a$), ($b$), ($c$) and ($d$) are direct consequences of Theorem~\ref{WN g=b} and the divisibility properties of $\Psi_{b}(x)$ in the squarefree case given in Lemma \ref{x1 mod r}. On the other hand, ($e$) follows from Theorem~\ref{WN g=b} and Lemma~\ref{Lem r pow}. 
The remaining assertion is straightforward from Theorem~\ref{WN g=b} and 
Lemma \ref{gen case}.
The converse statements hold because Lemmas \ref{x1 mod r} and \ref{Lem r pow} 
are equivalences. 
\end{proof}

\subsubsection*{Prime values of Waring numbers}
If $p$ and $r$ are distinct primes, by ($a$) of Theorem \ref{exact val Hamm} we have
	\begin{equation} \label{gpr(r-1)a}
	g(\tfrac 1r \Psi_r(p^{ad}), p^{adr}) = r \quad \text{for any $a\in \N$} \qquad \Leftrightarrow \qquad
	d=ord_r(p).
	\end{equation}
By studying the congruence classes of $p$ modulo the first primes $r= 2, 3, 5, 7$ we have the following series of results. 
\begin{coro} \label{coro g=2}
If $p$ is an odd prime and $a$ is a positive integer then   
\begin{equation} \label{r, g=2}
g(\tfrac{p^a+1}2, p^{2a}) = 2.
\end{equation}
\end{coro}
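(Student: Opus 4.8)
The plan is to apply part $(a)$ of Theorem~\ref{exact val Hamm} with the prime $r=2$. In the general formula \eqref{gkqb}, setting $b=r=2$ gives the Waring number $g\big(\tfrac{p^{2a}-1}{2(p^a-1)}, p^{2a}\big)=2$, and a direct simplification shows that $\tfrac{p^{2a}-1}{2(p^a-1)} = \tfrac{(p^a-1)(p^a+1)}{2(p^a-1)} = \tfrac{p^a+1}{2}$, which is exactly the value of $k$ appearing in \eqref{r, g=2}. So the entire statement reduces to verifying that the hypotheses of Theorem~\ref{exact val Hamm}$(a)$ are met.

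Those hypotheses are: $b=r$ is a prime different from $p$, and $x=p^a \equiv 1 \pmod r$. Here I would take $r=2$, so the first condition says $p \neq 2$, which is guaranteed since $p$ is assumed to be an odd prime. The second condition becomes $p^a \equiv 1 \pmod 2$, i.e.\@ $p^a$ is odd; this is immediate because $p$ is odd and hence every power $p^a$ is odd. Thus both hypotheses hold automatically for any odd prime $p$ and any positive integer $a$.

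With the hypotheses checked, I would simply invoke Theorem~\ref{exact val Hamm}$(a)$ (equivalently, Corollary~\ref{conditions}$(a)$, since $p^a \equiv 1 \pmod 2$) to conclude that \eqref{g=b} holds with $b=2$, and then rewrite the resulting $k=\tfrac 1b \Psi_b(p^a) = \Psi_2(p^a) = p^a+1$ divided by $b=2$ to obtain the clean form $g\big(\tfrac{p^a+1}2, p^{2a}\big)=2$.

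This corollary presents essentially no obstacle: it is a direct specialization of the preceding theorem to the smallest prime, and the only thing to observe is the arithmetic identity $\tfrac{p^{2a}-1}{2(p^a-1)} = \tfrac{p^a+1}{2}$ together with the trivial parity fact that odd powers of odd primes are odd. If anything, the one point worth a brief remark is that one should note $k=\tfrac{p^a+1}2$ is genuinely an integer, which again follows from $p^a$ being odd, so that $(k,p^{2a})$ is a legitimate Waring pair.
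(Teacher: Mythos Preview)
Your proposal is correct and follows exactly the same approach as the paper, which simply states that the result is ``straightforward from ($a$) of Theorem~\ref{exact val Hamm} with $r=2$.'' Your write-up merely makes explicit the trivial verifications (that $p\neq 2$ and $p^a\equiv 1\pmod 2$) and the algebraic simplification $\tfrac{p^{2a}-1}{2(p^a-1)}=\tfrac{p^a+1}{2}$ that the paper leaves implicit.
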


\begin{proof}
Straightforward from ($a$) of Theorem \ref{exact val Hamm} with $r=2$.
\end{proof}

\begin{coro} \label{coro g=3}
If $p$ is a prime and $a$ is a positive integer we have  
\begin{equation} \label{r, g=3}
\begin{aligned}
& g(\tfrac{p^{2a}+p^a+1}3, p^{3a})=3, 		  & & \qquad \text{if \: $p \equiv 1 \!\! \pmod 3$}, \\[1mm]
& g(\tfrac{p^{4a}+p^{2a}+1}3, p^{6a})=3,      & & \qquad \text{if \: $p \equiv 2\!\! \pmod 3$}. 
\end{aligned}
\end{equation}
\end{coro}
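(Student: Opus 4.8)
The plan is to derive Corollary~\ref{coro g=3} as a direct application of part $(a)$ of Theorem~\ref{exact val Hamm} with $r=3$, by translating the hypothesis on the residue class of $p$ modulo $3$ into the congruence condition $x \equiv 1 \pmod 3$ required there. Recall that $(a)$ states that if $b=r$ is prime, $r\ne p$, and $x=p^a\equiv 1 \pmod r$, then $g(\tfrac 1r \Psi_r(p^a), p^{ar}) = r$. So the entire content of the proof is to check, in each of the two cases, that the relevant power of $p$ is congruent to $1$ modulo $3$ and then read off $\Psi_3$ explicitly.

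First I would handle the case $p \equiv 1 \pmod 3$. Here I take $x = p^a$; since $p\equiv 1\pmod 3$ we immediately get $x = p^a \equiv 1 \pmod 3$. Applying $(a)$ with $r=3$ and exponent $a$ gives $g(\tfrac 13 \Psi_3(p^a), p^{3a}) = 3$, and since $\Psi_3(p^a) = p^{2a} + p^a + 1$ by \eqref{Psi b}, this is exactly the first displayed formula. Second I would handle $p \equiv 2 \pmod 3$. The point is that $p$ itself is not $\equiv 1 \pmod 3$, but $p^2 \equiv 4 \equiv 1 \pmod 3$, so I set $x = p^{2a}$ (equivalently, I apply $(a)$ with the base exponent doubled, i.e.\ replace $a$ by $2a$). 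Then $x \equiv 1 \pmod 3$, and $(a)$ yields $g(\tfrac 13 \Psi_3(p^{2a}), p^{6a}) = 3$ with $\Psi_3(p^{2a}) = p^{4a} + p^{2a} + 1$, which is the second formula.

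The only point requiring a word of care is the hypothesis $r\ne p$ in $(a)$: this holds automatically in the first case because $p\equiv 1\pmod 3$ forces $p\ne 3$, and in the second because $p\equiv 2\pmod 3$ likewise forces $p\ne 3$. Equivalently, the observation is simply that $\operatorname{ord}_3(p)=1$ when $p\equiv 1\pmod 3$ and $\operatorname{ord}_3(p)=2$ when $p\equiv 2\pmod 3$, so the characterization \eqref{gpr(r-1)a} with $r=3$ directly produces the two exponents $d=1$ and $d=2$ appearing in the two formulas. I do not expect any genuine obstacle here; the statement is essentially a specialization of the prime case of the main theorem, and the proof is a two-line congruence check in each case together with the explicit form of $\Psi_3$.
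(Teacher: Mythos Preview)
Your proposal is correct and follows essentially the same approach as the paper: both apply part $(a)$ of Theorem~\ref{exact val Hamm} with $r=3$, and in the case $p\equiv 2\pmod 3$ use that $p^a\equiv(-1)^a\equiv 1\pmod 3$ exactly when $a$ is even, which amounts to replacing $a$ by $2a$. Your additional remarks on $r\ne p$ and on \eqref{gpr(r-1)a} are accurate but not needed beyond what the paper already does.
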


\begin{proof}
Follows directly from ($a$) of Theorem \ref{exact val Hamm} with $r=3$, by noting that if $p\equiv 2 \pmod 3$, then $p^a\equiv (-1)^a \equiv 1 \pmod 3$ if and only if $a$ is even.  
\end{proof}

\begin{coro} \label{coro g=5}
If $p$ is a prime and $a$ is a positive integer then   
\begin{equation} \label{r, g=5}
\begin{aligned}
& g(\tfrac{p^{4a}+p^{3a}+p^{2a}+p^a+1}{5}, p^{5a})=5, 			& & \qquad \text{if \: $p \equiv  1 \!\!\pmod 5$}, \\[1mm]
& g(\tfrac{p^{8a}+p^{6a}+p^{4a}+p^{2a}+1}{5}, p^{10a})=5, 		& & \qquad \text{if \: $p \equiv  4 \!\!\pmod 5$}, \\[1mm]
& g(\tfrac{p^{16a}+p^{12a}+p^{8a}+p^{4a}+1}{5}, p^{20a})=5, 	& & \qquad \text{if \: $p \equiv 2,3 \!\!\pmod  5$}.
\end{aligned}
\end{equation}
\end{coro}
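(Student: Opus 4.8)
The plan is to apply part ($a$) of Theorem~\ref{exact val Hamm} with $r=5$, or equivalently to read off the three cases directly from the equivalence \eqref{gpr(r-1)a}. That equivalence asserts that $g(\tfrac 15 \Psi_5(p^{ad}), p^{5ad})=5$ holds for every $a\in\N$ precisely when $d=ord_5(p)$. Thus the entire argument reduces to computing the multiplicative order of $p$ modulo $5$ in each of the relevant residue classes, and then expanding $\Psi_5(p^{ad})=p^{4ad}+p^{3ad}+p^{2ad}+p^{ad}+1$ to recover the stated numerators.

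Since $(\Z/5\Z)^*$ is cyclic of order $4$, the only possible orders are $1$, $2$ and $4$, and I would determine them class by class. First, if $p\equiv 1\pmod 5$ then $ord_5(p)=1$, so $d=1$ and the first displayed formula follows. Next, if $p\equiv 4\equiv -1\pmod 5$ then $p^2\equiv 1$ while $p\not\equiv 1$, so $ord_5(p)=2$ and $d=2$ gives the second formula. Finally, $2$ and $3$ are the two generators of $(\Z/5\Z)^*$ (indeed $2^2\equiv 4$ and $2^4\equiv 1$, and symmetrically for $3$), so $ord_5(p)=4$ whenever $p\equiv 2,3\pmod 5$; taking $d=4$ yields the third formula.

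In the language of part ($a$) of Theorem~\ref{exact val Hamm}, in each case one sets $x=p^{ad}$ with $d=ord_5(p)$ and checks the hypothesis $x\equiv 1\pmod 5$, which is automatic from the definition of the order. I would also note that $p=5$ is implicitly excluded, since then $p\equiv 0\pmod 5$ falls into none of the three classes, consistent with the requirement $r\ne p$ in Theorem~\ref{exact val Hamm}. There is essentially no real obstacle here: the only content is the (routine) order computation in the cyclic group $(\Z/5\Z)^*$, exactly paralleling the proofs of Corollaries~\ref{coro g=2} and \ref{coro g=3}.
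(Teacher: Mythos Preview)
Your proposal is correct and matches the paper's own proof essentially verbatim: both invoke part~($a$) of Theorem~\ref{exact val Hamm} with $r=5$ and reduce everything to the routine computation of $ord_5(p)$ in each residue class, which you carry out cleanly via the equivalence~\eqref{gpr(r-1)a}. Your remark that $p=5$ is automatically excluded is a useful clarification the paper leaves implicit.
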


\begin{proof}
Follows from ($a$) of Theorem \ref{exact val Hamm} with $r=5$. If $p\equiv 4 \equiv -1 \pmod 5$, then $p^a \equiv (-1)^a \equiv 1 \pmod 5$ if and only if $a$ is even. If $p\equiv 2\pmod 5$ then $p^a \equiv 2^a\equiv 1 \pmod 5$ if and only if $a$ is a multiple of $ord_5(2)=4$. 
Thus, if $p\equiv 3\equiv -2 \pmod 5$, $p^a \equiv (-2)^a \equiv 1 \pmod 5$ if and only if $a$ is also a multiple of $4$, and the result follows.   
\end{proof}

Similarly as before we get
\begin{coro} \label{coro g=7}
	If $p$ is a prime and $a$ is a positive integer then   
	\begin{equation} \label{r, g=7}
	\begin{aligned}
& g(\tfrac{p^{6a}+p^{5a} + p^{4a} + p^{3a}+p^{2a}+p^a+1}{7}, p^{7a})=7, & & \qquad \text{if \: $p \equiv  1 \!\!\pmod 7$}, \\[1mm]
& g(\tfrac{p^{12a}+p^{10a}+p^{8a} + p^{6a}+p^{4a}+p^{2a}+1}{7}, p^{14a})=7, & & \qquad \text{if \: $p \equiv  6 \!\!\pmod 7$}, \\[1mm]
& g(\tfrac{p^{18a}+p^{15a}+p^{12a} + p^{9a}+p^{6a}+p^{3a}+1}{7}, p^{21a})=7, & & \qquad \text{if \: $p \equiv  2 \!\!\pmod 7$}, \\[1mm]
& g(\tfrac{p^{36a}+p^{30a}+p^{24a} + p^{18}+p^{12a}+p^{6a}+1}{7}, p^{42a})=7, & & \qquad \text{if \: $p \equiv  3,4,5 \!\!\pmod 7$}.
	\end{aligned}
	\end{equation}
\end{coro}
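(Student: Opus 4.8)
The plan is to specialize part $(a)$ of Theorem \ref{exact val Hamm} (equivalently, to invoke \eqref{gpr(r-1)a}) to $r = 7$, exactly as was done for $r = 2, 3, 5$ in the proofs of Corollaries \ref{coro g=2}--\ref{coro g=5}. Each of the four displayed identities has the shape $g(\tfrac 17 \Psi_7(p^{da}), p^{7da}) = 7$ for a fixed multiplier $d \in \{1, 2, 3, 6\}$, so by part $(a)$ of Theorem \ref{exact val Hamm} it suffices to check, in each case, that $x := p^{da}$ satisfies $x \equiv 1 \pmod 7$ for every $a \in \N$; this is exactly the condition $ord_7(p) \mid d$.

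The argument therefore reduces to recording the multiplicative orders in the cyclic group $(\Z/7\Z)^*$ of order $6$. First I would note that $ord_7(1) = 1$; that $6 \equiv -1 \pmod 7$ gives $ord_7(6) = 2$; that $2^3 = 8 \equiv 1 \pmod 7$ gives $ord_7(2) = 3$, whence $ord_7(4) = 3$ as well since $4 \equiv 2^2$; and that $3$ is a primitive root modulo $7$ with $5 \equiv 3^5$, so $ord_7(3) = ord_7(5) = 6$. Applying the divisibility test $ord_7(p) \mid d$ then confirms the four cases: $d = 1$ for $p \equiv 1$, $d = 2$ for $p \equiv 6$, $d = 3$ for $p \equiv 2$, and $d = 6$ for $p \equiv 3, 4, 5$ (here the orders $6, 3, 6$ all divide $6$). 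Expanding $\Psi_7(p^{da}) = p^{6da} + p^{5da} + \cdots + p^{da} + 1$ reproduces the seven-term numerators and the field sizes $p^{7da}$ displayed in \eqref{r, g=7}.

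I do not expect any genuine obstacle: the corollary is a direct specialization of Theorem \ref{exact val Hamm}$(a)$, and the only computation is elementary arithmetic in $(\Z/7\Z)^*$. The single point worth care is the last case: although $ord_7(4) = 3$, the class $p \equiv 4$ still fits the $d = 6$ formula because $3 \mid 6$, so $p^{6a} \equiv 1 \pmod 7$; this is why $p \equiv 4$ may be grouped with $\{3, 5\}$ even though it also satisfies the smaller $d = 3$ identity. I would verify each order by direct computation to avoid the easy error of sorting the residues $2, 3, 4, 5$ by some coarser invariant instead of by their exact order modulo $7$.
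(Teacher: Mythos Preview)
Your proposal is correct and matches the paper's approach exactly: the paper simply writes ``Similarly as before we get'' before stating the corollary, meaning it intends the same argument you give---apply part $(a)$ of Theorem \ref{exact val Hamm} with $r=7$ and sort the nonzero residues modulo $7$ by their multiplicative order, as was done for $r=2,3,5$ in Corollaries \ref{coro g=2}--\ref{coro g=5}. Your observation that $ord_7(4)=3$ (so $p\equiv 4$ also satisfies the $d=3$ identity, not only the $d=6$ one listed) is a nice clarification of the statement.
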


We now illustrate Corollaries \ref{coro g=2} -- \ref{coro g=7}.
\begin{exam} \label{exam g=2}
From \eqref{r, g=2}, for any $a\ge 1$ we have 
$$g(\tfrac{3^a+1}{2}, 3^{2a})= g(\tfrac{5^a+1}{2}, 5^{2a})= g(\tfrac{7^a+1}{2}, 7^{2a})=2.$$	
Thus, for $a=1,2,3$, we get $g(2,9)=g(5,81)=g(14,729)=2$ for $p=3$,
$g(3,25)=g(13,625)=g(63,15{.}625)=2$ for $p=5$ and $g(4,49)=g(25,2{.}401)=g(172,117{.}649)=2$ for $p=7$. 
\end{exam}

\begin{exam}
Since $11\equiv 2 \pmod 3$, $11\equiv 1 \pmod 5$, $11\equiv 4 \pmod 7$ and $13 \equiv 1 \pmod 3$, 
$13 \equiv 3 \pmod 5$, $13 \equiv 6 \pmod 7$, by Corollaries \ref{coro g=2}--\ref{coro g=7} we have 
\begin{align*}
& g(\tfrac{11+1}{2}, 11^2)=2, & & g(\tfrac{13+1}{2}, 13^2)=2, \\[1mm]  
& g(\tfrac{11^2+11+1}{3}, 11^6)=3, & & g(\tfrac{13^2+13+1}{3}, 13^3)=3, \\[1mm]  
& g(\tfrac{11^4+11^3+11^2+11+1}{5}, 11^5)=5, & & g(\tfrac{13^{16}+13^{12}+13^{8}+13^{4}+13^2+1}{5}, 13^{20})=5, \\[1mm]
& g(\tfrac{11^{36}+11^{30}+11^{24}+11^{18}+11^{12}+11^{6}+11}{7}, 11^{42})=7, & & 
g(\tfrac{13^{12}+13^{10}+13^{8}+13^{6}+13^{4}+13^{2}+1}{7}, 13^{14})=7.  
\end{align*}
\end{exam}

\subsubsection*{Squarefree values of Waring numbers}
Now, we deduce some results from parts ($b$) and ($c$) of Theorem \ref{exact val Hamm}.

\begin{coro} \label{coro 6}
If $p$ is an odd prime and $a$ is a positive integer then
\begin{equation} \label{coro 6}
g(\tfrac{p^{5a}+p^{4a}+p^{3a}+p^{2a}+p^a+1}6,p^{6a})=6, \qquad \text{if } p\ne 3,
\end{equation}
and 
\begin{equation} \label{coro 10}
\begin{aligned}
& g(\tfrac{p^{8a}+p^{7a}+p^{6a}+p^{5a}+p^{4a}+p^{3a}+p^{2a}+p^a+1}{10}, p^{10a})= 10, &&\qquad \text{if } p\equiv 1,4 \pmod{5},\\ 
& g(\tfrac{p^{16a}+p^{14a}+p^{12a}+p^{10a}+p^{8a}+p^{6a}+p^{4a}+p^{2a}+1}{10}, p^{20a})= 10, &&\qquad \text{if } p\equiv 2,3 \pmod{5}.  
\end{aligned}
\end{equation}
\end{coro}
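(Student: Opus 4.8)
The plan is to derive both assertions directly from part ($b$) of Theorem \ref{exact val Hamm}, observing that $6 = 2\cdot 3$ and $10 = 2\cdot 5$ are both of the form $2r$ with $r$ an odd prime. The whole task then reduces to choosing a suitable base $x=p^c$ and checking two conditions: that $x$ is coprime with $b$, and that $x\equiv \pm 1 \pmod r$. Once these hold, Theorem \ref{exact val Hamm}($b$) yields $g(\tfrac 1b \Psi_b(x), x^b)=b$ with no further work; the underlying reason is the squarefree divisibility criterion of Lemma \ref{x1 mod r}($b$).

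For $b=6$ (so $r=3$) I would first note that the hypothesis ``$p$ odd, $p\ne 3$'' makes $p$, and hence $x=p^a$, coprime with $6$. The congruence condition is then automatic: since $\gcd(p,3)=1$ we have $p\equiv \pm 1 \pmod 3$, so $p^a\equiv(\pm 1)^a\equiv \pm 1 \pmod 3$ for every $a$. Applying Theorem \ref{exact val Hamm}($b$) with $x=p^a$ gives $g(\tfrac 16\Psi_6(p^a), p^{6a})=6$, the first assertion; this is exactly why no congruence class of $p$ modulo $3$ must be singled out.

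For $b=10$ (so $r=5$) the base must satisfy $x\equiv \pm 1 \pmod 5$, i.e. $x$ must lie in the order-$2$ subgroup $\{1,4\}$ of $(\Z/5\Z)^*$. If $p\equiv 1$ or $4\pmod 5$, then $p\equiv \pm 1 \pmod 5$, so $p^a\equiv\pm 1\pmod 5$ for all $a$; taking $x=p^a$ (coprime with $10$ since $p$ is odd and $p\ne 5$) and invoking Theorem \ref{exact val Hamm}($b$) gives $g(\tfrac 1{10}\Psi_{10}(p^a), p^{10a})=10$. If instead $p\equiv 2$ or $3\pmod 5$, then $p$ is a primitive root modulo $5$ and $p^a\equiv \pm 1\pmod 5$ only for even $a$; to cover all $a$ I would pass to $x=p^{2a}$ (i.e. apply the theorem with $a$ replaced by $2a$), using that $p^2\equiv 4\equiv -1\pmod 5$ and hence $p^{2a}=(p^2)^a\equiv(-1)^a\equiv\pm 1\pmod 5$. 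Theorem \ref{exact val Hamm}($b$) then gives $g(\tfrac 1{10}\Psi_{10}(p^{2a}), p^{20a})=10$, the field size now being $(p^{2a})^{10}=p^{20a}$, as in the second line of \eqref{coro 10}.

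There is no genuinely hard step: the statement is a bookkeeping specialization of Theorem \ref{exact val Hamm}($b$), which already encapsulates the divisibility $10\mid \Psi_{10}(x)\Leftrightarrow x\equiv \pm1\pmod 5$. The only point needing care is the substitution $x=p^{2a}$ in the $p\equiv 2,3\pmod 5$ branch, which is forced precisely because $\pm 1$ is not attained by a primitive root at odd exponents, and which is what doubles the relevant field degree from $10a$ to $20a$. To close, I would simply expand $\Psi_6(p^a)$, $\Psi_{10}(p^a)$ and $\Psi_{10}(p^{2a})$ to rewrite the exponent $\tfrac 1b\Psi_b(\cdot)$ in the explicit polynomial form displayed in \eqref{coro 10}.
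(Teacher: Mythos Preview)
Your proposal is correct and follows essentially the same approach as the paper: both proofs apply part ($b$) of Theorem \ref{exact val Hamm} with $b=2r$ for $r=3$ and $r=5$, reducing everything to the verification that $x=p^a$ (respectively $x=p^{2a}$) is coprime with $b$ and satisfies $x\equiv\pm1\pmod r$. The paper's proof is terser (it dispatches the $r=5$ case with ``Similarly for $r=5$ if $p\ne 5$''), while you spell out the case split $p\equiv 1,4$ versus $p\equiv 2,3\pmod 5$ explicitly, but the substance is identical.
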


\begin{proof}
	Let $b=2r$. If $r=3$, then $m=6a$. If $x=p^a$, then $x \equiv\pm 1 \pmod 3$ since $p\ne 3$ is prime. The result follows from ($b$) of the last theorem. Similarly for $r=5$ if $p \ne 5$.
\end{proof}

\begin{coro} \label{coro 15}
If $p\ne 3,5$ is a prime and $a$ a positive integer then we have 
\begin{equation} \label{coro 15}
\begin{aligned}
& g(\tfrac{1}{15} \Psi_{15}(p^{a}), p^{15a})=15, && \quad \text{if $p\equiv 1\pmod{15}$}, \\ 
& g(\tfrac{1}{15} \Psi_{15}(p^{2a}),p^{30a})=15, && \quad \text{if $p\equiv -1, \pm 4 \pmod{15}$}, \\  
& g(\tfrac{1}{15} \Psi_{15}(p^{4a}),p^{60a})=15, && \quad \text{if $p\equiv \pm 2, \pm 7 \pmod{15}$}.
\end{aligned}
\end{equation}
\end{coro}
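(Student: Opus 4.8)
The plan is to recognize this as a direct application of part $(c)$ of Theorem~\ref{exact val Hamm}. Indeed, $b=15=3\cdot 5 = rr'$ with $r=3<r'=5$ odd primes satisfying $r\nmid r'-1$ (since $3\nmid 4$), so part $(c)$ applies to $b=15$: for any integer $x$ coprime with $15$ one has $g(\tfrac1{15}\Psi_{15}(x),x^{15})=15$ \emph{if and only if} $x\equiv 1\pmod{15}$. Since $p\ne 3,5$ is prime, every power of $p$ is coprime with $15$, so the only thing to decide, case by case, is for which exponent one has $p^{e}\equiv 1\pmod{15}$; equivalently, to compute $d=ord_{15}(p)$ and then take $x=p^{da}$, so that $x\equiv 1\pmod{15}$ and the ambient field becomes $p^{15\,da}$.

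Next I would compute $ord_{15}(p)$ for each residue class. By the Chinese remainder theorem $(\mathbb{Z}_{15})^*\cong (\mathbb{Z}_3)^*\times(\mathbb{Z}_5)^*\cong \Z/2\times \Z/4$, which has order $\varphi(15)=8$ and exponent $4$; hence every unit modulo $15$ has order $1$, $2$ or $4$. A short inspection of the eight units gives: order $1$ exactly for $p\equiv 1$; order $2$ exactly for $p\equiv -1,\pm 4\pmod{15}$ (for instance $(-1)^2\equiv 1$ and $4^2\equiv 16\equiv 1$); and order $4$ exactly for the remaining classes $p\equiv \pm 2,\pm 7\pmod{15}$ (for instance $2^2\equiv 4$, $2^4\equiv 1$, and $7^2\equiv 4$, $7^4\equiv 1$).

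Finally I would conclude by matching each case to the corresponding exponent. If $p\equiv 1\pmod{15}$ then $d=1$, so $x=p^{a}\equiv 1\pmod{15}$ and the field is $p^{15a}$, giving the first identity. If $p\equiv -1,\pm 4\pmod{15}$ then $d=2$, so $x=p^{2a}\equiv 1\pmod{15}$ and the field is $p^{30a}$, giving the second. If $p\equiv \pm 2,\pm 7\pmod{15}$ then $d=4$, so $x=p^{4a}\equiv 1\pmod{15}$ and the field is $p^{60a}$, giving the third. In each case part $(c)$ of Theorem~\ref{exact val Hamm} yields the value $15$.

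There is no serious obstacle here: the substance lies in verifying that $3\nmid 5-1$, so that the clean criterion of part $(c)$ (rather than the more delicate general squarefree condition \eqref{div b Psi}) is available, and in the elementary order computation inside $(\mathbb{Z}_{15})^*$. The only point requiring a little care is to check that the three listed families of residues are mutually exclusive and exhaust all units modulo $15$, which is immediate from the isomorphism $(\mathbb{Z}_{15})^*\cong \Z/2\times\Z/4$.
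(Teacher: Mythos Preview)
Your proof is correct and follows essentially the same approach as the paper's own proof: both invoke part $(c)$ of Theorem~\ref{exact val Hamm} with $r=3$, $r'=5$ (noting $3\nmid 5-1$), and then determine for each residue class of $p$ modulo $15$ the smallest exponent $e$ with $p^{e}\equiv 1\pmod{15}$. The only cosmetic difference is that you phrase this via the group structure $(\mathbb{Z}_{15})^*\cong \Z/2\times\Z/4$ to read off $ord_{15}(p)\in\{1,2,4\}$, whereas the paper checks each class directly; the content is identical.
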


\begin{proof}
We will use part~($c$) of Theorem~\ref{exact val Hamm}. 
Let $r=3$ and $r'=5$. Hence, $3\nmid 5-1$, $b=15$ and $m=15a$.
Since $(p^a,15)=1$ we only have to look at the cases $p \equiv d \pmod{15}$ for $d=1,2,4,7,8,11,13,14$.
If $p\equiv -1,\pm 4 \pmod{15}$ then $p^a \equiv (-1)^a \equiv (\pm 4)^a \equiv 1 \pmod{15}$ if and only if $a$ is even.   
If $p\equiv \pm 2,\pm 7 \pmod{15}$ then $p^a \equiv (\pm 2)^a \equiv (\pm 7)^a \equiv 1 \pmod{15}$ if and only if $a$ is a 
multiple of $4$.  
This completes the proof.
\end{proof}

Now we take the product of two primes $r, r'$ with $r \nmid r'-1$.
\begin{coro} \label{coro 21}
	If $p\ne 3,7$ is a prime and $a$ is a positive integer we have:   
	\begin{enumerate}[$(a)$]
	\item $g(\tfrac{1}{21} \Psi_{21}(p^{a}),p^{21a})=21$, 
	if $p\equiv 1\pmod{3}$ and $p\equiv 1,2,4\pmod{7}$. \sk 
		
	\item $g(\tfrac{1}{21} \Psi_{21}(p^{2a}), p^{42a})=21$, 
	if $p\equiv 1\pmod{3}$ and $p\equiv 3,5,6\pmod{7}$ or if $p\equiv 2\pmod{3}$. 
	\end{enumerate}
\end{coro}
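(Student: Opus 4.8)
The plan is to apply part $(d)$ of Theorem~\ref{exact val Hamm} rather than part $(c)$, since for $b=21=3\cdot 7$ we have $3 \mid 7-1=6$, so the hypothesis $r \nmid r'-1$ of part $(c)$ fails and we cannot argue as in Corollary~\ref{coro 15}. Writing $r_1=3$ and $r_2=7$ (both distinct from $p$, as $p\ne 3,7$), the general squarefree criterion \eqref{div b Psi} reduces the divisibility $21 \mid \Psi_{21}(x)$, for $x$ coprime with $21$, to the two congruences
\begin{equation*}
x \equiv 1 \pmod 3 \qquad \text{and} \qquad x^{21/7} = x^3 \equiv 1 \pmod 7.
\end{equation*}
Once these are checked for the appropriate power of $p$, Theorem~\ref{exact val Hamm}$(d)$ immediately yields $g(\tfrac{1}{21}\Psi_{21}(x), x^{21})=21$.

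For part $(a)$ I would take $x=p^a$, so the field is $p^{21a}=(p^a)^{21}=x^{21}$. The hypothesis $p\equiv 1 \pmod 3$ gives $x=p^a\equiv 1\pmod 3$ at once. For the second congruence, the residues $1,2,4$ are exactly the elements of order dividing $3$ in $(\Z/7)^*$, i.e.\@ the unique subgroup of order $3$; hence $p\equiv 1,2,4\pmod 7$ is equivalent to $ord_7(p)\mid 3$, whence $p^3\equiv 1\pmod 7$ and therefore $x^3=p^{3a}\equiv 1\pmod 7$. Both conditions of part $(d)$ hold and the claim follows.

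For part $(b)$ I would instead take $x=p^{2a}$, so the field becomes $p^{42a}=(p^{2a})^{21}=x^{21}$. The congruence $x\equiv 1\pmod 3$ holds in both sub-cases: if $p\equiv 1\pmod 3$ then $p^{2a}\equiv 1$, while if $p\equiv 2\equiv -1\pmod 3$ then $p^{2a}=(p^2)^a\equiv 1\pmod 3$ because the exponent is even. The congruence $x^3=p^{6a}\equiv 1\pmod 7$ now holds automatically by Fermat's little theorem, since $p^6\equiv 1\pmod 7$ for every $p\ne 7$; this is precisely why doubling the exponent to $2a$ rescues the residues $p\equiv 3,5,6\pmod 7$, where $ord_7(p)\in\{2,6\}$ fails to divide $3$ but does divide $6$. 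Applying Theorem~\ref{exact val Hamm}$(d)$ once more gives $g=21$.

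The main obstacle is conceptual rather than computational: because $3\mid 7-1$, the clean product criterion of part $(c)$ is unavailable, so one must work with the genuine two-congruence condition of part $(d)$, in which the prime $7$ is tested through $x^3\pmod 7$ instead of $x\pmod 7$. The only real bookkeeping is to choose the exponent ($a$ in part $(a)$, $2a$ in part $(b)$) so that $ord_7$ of the relevant power of $p$ divides $3$; everything else reduces to routine order computations in $(\Z/3)^*$ and $(\Z/7)^*$.
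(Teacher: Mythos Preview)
Your proof is correct and follows the same route as the paper, which simply invokes part~$(d)$ of Theorem~\ref{exact val Hamm}; you have just supplied the order computations in $(\Z/3)^*$ and $(\Z/7)^*$ that the paper leaves to the reader. Your observation that part~$(c)$ is unavailable here because $3\mid 7-1$ is also accurate, even though the paper's text preceding the corollary is slightly misleading on this point.
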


\begin{proof}
	It is a direct consequence of ($d$) of the Theorem \ref{exact val Hamm}.
\end{proof}

\subsubsection*{Power of primes and general values of Waring numbers}
We now consider $b$ in general form.
Given a positive integer $b = r_1^{t_1} r_2^{t_2} \cdots r_\ell^{t_\ell}$ with $r_1, r_2, \ldots, r_\ell$ different primes, the radical of $b$ is $rad(b) = r_1 r_2 \cdots r_\ell$. 
We now exhibit an easy sufficient condition for \eqref{gkqb} to hold.
\begin{prop} \label{coro ptpm}
Let $p$ be a prime and $a,b \in \N$ such that $p\nmid b$. If $\varphi(rad(b)) \mid a$ 
then $g(\tfrac 1b \Psi_b(p^a),p^{ab})=b$. 
\end{prop}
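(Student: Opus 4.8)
The plan is to deduce the value of the Waring number from Theorem~\ref{WN g=b}, for which the only thing to check is the divisibility $b \mid \Psi_b(p^a)$. Since $p \nmid b$, the integer $x = p^a$ is coprime with $b$, so I would invoke Lemma~\ref{gen case}: writing $b = r_1^{t_1} \cdots r_\ell^{t_\ell}$ with $r_1 < \cdots < r_\ell$ primes, it is enough to prove that for every $i$ one has $ord_{r_i^{t_i}}(p^a) = r_i^{h_i}$ with $0 \le h_i \le t_i - 1$. Setting aside the trivial case $b = 1$ (where $\varphi(1) = 1 \mid a$ and $g(1, p^a) = 1$), the entire argument reduces to this statement about multiplicative orders.

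To compute these orders, fix $i$ and put $d_i = ord_{r_i^{t_i}}(p)$. Coprimality gives $d_i \mid \varphi(r_i^{t_i}) = r_i^{t_i - 1}(r_i - 1)$, so I can write $d_i = r_i^{e_i} f_i$ with $0 \le e_i \le t_i - 1$ and $f_i \mid r_i - 1$; in particular $\gcd(r_i^{e_i}, f_i) = 1$. Using $ord_{r_i^{t_i}}(p^a) = d_i / \gcd(d_i, a)$ together with the multiplicativity of the gcd over the coprime factors $r_i^{e_i}$ and $f_i$, I obtain
\begin{equation*}
ord_{r_i^{t_i}}(p^a) = \frac{r_i^{e_i} f_i}{\gcd(r_i^{e_i}, a)\, \gcd(f_i, a)}.
\end{equation*}

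The key step, and the one on which everything hinges, is that the hypothesis $\varphi(rad(b)) \mid a$ is engineered precisely to annihilate the prime-to-$r_i$ part $f_i$: since $rad(b) = r_1 \cdots r_\ell$ is squarefree, $\varphi(rad(b)) = \prod_j (r_j - 1)$, and the chain
\begin{equation*}
f_i \mid r_i - 1 \mid \varphi(rad(b)) \mid a
\end{equation*}
shows $f_i \mid a$, whence $\gcd(f_i, a) = f_i$. After cancellation,
\begin{equation*}
ord_{r_i^{t_i}}(p^a) = \frac{r_i^{e_i}}{\gcd(r_i^{e_i}, a)},
\end{equation*}
which is a power $r_i^{h_i}$ of $r_i$ with $0 \le h_i \le e_i \le t_i - 1$. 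This is exactly the hypothesis of Lemma~\ref{gen case}, so $b \mid \Psi_b(p^a)$, and Theorem~\ref{WN g=b} then delivers $g(\tfrac 1b \Psi_b(p^a), p^{ab}) = b$. The only routine care needed is the splitting of $\gcd(d_i, a)$ across the coprime factors $r_i^{e_i}$ and $f_i$; everything else is forced by the divisibility chain above, which is really the heart of the proposition.
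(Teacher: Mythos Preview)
Your proof is correct and follows essentially the same route as the paper: reduce to showing that $ord_{r_i^{t_i}}(p^a)$ is a power of $r_i$ of exponent at most $t_i-1$, using the divisibility chain $r_i-1 \mid \varphi(rad(b)) \mid a$, and then conclude via Lemma~\ref{gen case} and Theorem~\ref{WN g=b} (the paper phrases this last step as item~($f$) of Theorem~\ref{exact val Hamm}, which is the same thing). The only cosmetic difference is that the paper verifies $(p^a)^{r_i^{t_i-1}}\equiv 1 \pmod{r_i^{t_i}}$ directly via Euler--Fermat, whereas you compute $ord_{r_i^{t_i}}(p^a)$ through the formula $ord(p^a)=ord(p)/\gcd(ord(p),a)$ and a coprime splitting of $\gcd$; both arguments hinge on the same divisibility $r_i-1\mid a$.
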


\begin{proof}
Let $x=p^a$ and suppose $b=r_1^{t_1} \cdots r_\ell^{t_\ell}$ is the prime decomposition of $b$. By item ($f$) of Theorem \ref{exact val Hamm}, it is enough to show that $ord_{r_{i}^{t_i}}(x)=r_{i}^{h_i}$ for some $0\le h_i\le t_{i}-1$ for every $i$. 
Note that $\varphi(r_{i}^{t_i})=r_{i}^{t_i-1}(r_i-1)$ and 
$\varphi(rad(b)) = \varphi(r_1 \cdots r_\ell) = (r_1-1)\cdots (r_\ell-1)$. 
Since $r_i-1 \mid a$ for $i=1,\ldots,\ell$, because $r_i-1\mid\varphi(rad(b))$ and $\varphi(rad(b))\mid a$ by hypothesis, the Euler-Fermat's theorem implies that 
$$x^{r_{i}^{t_i-1}} = p^{ar_{i}^{t_i-1}} = (p^{\varphi(r_i^{t_i})})^{\frac{a}{r_i-1}}\equiv 1 \pmod {r_{i}^{t_i}} 
\qquad \text{for all } i=1,\ldots,\ell.$$
This implies that $ord_{r_i^{t_i}}(x)\mid r_i^{t_i}$ and then there is some $0\le h_i \le t_i -1$ such that $ord_{r_i^{t_i}}(x)=r_i^{h_i}$ for each $i=1,\ldots,\ell$.
\end{proof}

\begin{rem}
In particular, if $p$ is a prime not dividing an integer $b$, taking $a=\varphi(rad(b))$ in Proposition~\ref{coro ptpm} we have the expression 
\begin{equation} \label{g rad b} 
g(\tfrac 1b \Psi_{b}(p^{\varphi(rad(b))}),p^{b\varphi(rad(b))})=b
\end{equation}
depending only on $b$. Note that this complements and improves Proposition \ref{coro sobre}. In fact, \eqref{coro ptpm} gives explicit Waring pairs for any positive integer $b$. Moreover, given $b$, we find a smaller pair (hence an smaller field) than in \eqref{gkq=b} such that 
$g(k,q)=b$. 
\end{rem}

As a consequence, when $b$ is a prime power we get the following.
\begin{coro} \label{corolito}
 Let $p,r$ be different primes and let $a,t\in \mathbb{N}$. If $r-1\mid a$, then we have that
\begin{equation} \label{g=rt}	
g(\tfrac{1}{r^t}\Psi_{r^t}(p^{a}),p^{ar^t})=r^t.
\end{equation}
 In particular, for every $a \in \N$ and every odd prime $r$ we have 
 \begin{equation} \label{g=2t}
	\begin{aligned}
		& g(\tfrac{1}{2^t}\Psi_{2^t}(p^{a}), p^{a2^t}) =2^t, 
		\\[1mm]
		& g(\tfrac{1}{r^t}\Psi_{r^t}(p^{r-1}), p^{\varphi(r^{t+1})}) =r^t. 
 \end{aligned}
 \end{equation}
\end{coro}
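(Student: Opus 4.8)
The plan is to obtain the whole statement as an immediate specialization of Proposition~\ref{coro ptpm}. That proposition asserts $g(\tfrac1b\Psi_b(p^a),p^{ab})=b$ whenever $p\nmid b$ and $\varphi(rad(b))\mid a$, so first I would apply it with $b=r^t$. The key observation is that the radical of a prime power is the prime itself, $rad(r^t)=r$, whence $\varphi(rad(r^t))=\varphi(r)=r-1$. Under this identification the divisibility hypothesis $\varphi(rad(b))\mid a$ of Proposition~\ref{coro ptpm} becomes precisely the hypothesis $r-1\mid a$ of the corollary, and the coprimality requirement $p\nmid b=r^t$ is guaranteed by $p\neq r$. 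Thus the proposition applies verbatim and yields
$$g\big(\tfrac{1}{r^t}\Psi_{r^t}(p^{a}),p^{ar^t}\big)=r^t,$$
which is \eqref{g=rt}.

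For the two identities in \eqref{g=2t} I would only specialize the parameters. For the first, I take $r=2$: then $r-1=1$ divides every $a\in\N$, so the hypothesis is automatically satisfied and \eqref{g=rt} holds for all $a$, giving $g(\tfrac{1}{2^t}\Psi_{2^t}(p^{a}),p^{a2^t})=2^t$. For the second, I set $a=r-1$, which trivially satisfies $r-1\mid a$, so \eqref{g=rt} gives $g(\tfrac{1}{r^t}\Psi_{r^t}(p^{r-1}),p^{(r-1)r^t})=r^t$. It then remains to rewrite the field exponent: since $\varphi(r^{t+1})=r^{t+1}-r^{t}=r^t(r-1)=(r-1)r^t$, we have $p^{(r-1)r^t}=p^{\varphi(r^{t+1})}$, which is the form stated in \eqref{g=2t}.

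I do not expect any genuine obstacle here, since the entire content is organizational: the corollary is the case $b=r^t$ of a result already established. The only points requiring explicit (but routine) verification are the radical computation $\varphi(rad(r^t))=r-1$, the totient identity $\varphi(r^{t+1})=r^t(r-1)$ used to match the field sizes, and the remark that the available hypothesis $p\neq r$ is exactly the coprimality condition $p\nmid r^t$ needed to invoke Proposition~\ref{coro ptpm}.
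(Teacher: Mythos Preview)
Your proposal is correct and follows exactly the same approach as the paper: apply Proposition~\ref{coro ptpm} with $b=r^t$ (noting $\varphi(rad(r^t))=r-1$), then specialize to $r=2$ and to $a=r-1$ for the two particular cases. You have merely spelled out the routine verifications (the radical computation and the identity $\varphi(r^{t+1})=r^t(r-1)$) that the paper leaves implicit.
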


\begin{proof}
The first expression follows directly from Proposition \ref{coro ptpm} with $b=r^t$.
The remaining expressions follow directly from \eqref{g=rt} by taking $r=2$ and $a=r-1$ respectively.
\end{proof}

\begin{exam}
($i$) Taking $p=2$, $r=3$ and $t=2$ in \eqref{g=rt} we have
$$g(\tfrac{1}{9} \Psi_{9}(2^2), \, 2^{18} )=g(9{.}709, 2^{18}) =9.$$ 

\noindent ($ii$) 
Taking $r=2$ and $t=2,3,4$ in the first expression in \eqref{g=2t} we get
\begin{equation}
\begin{aligned}
g(\tfrac{p^{3a}+p^{2a}+p^a+1}4, p^{4a}) &= 4, \\[1mm] 
g(\tfrac{p^{7a}+\cdots +p^{2a} + p^a+1}8, p^{8a}) &= 8, \\[1mm] 
g(\tfrac{p^{15a}+\cdots+ p^{2a}+p^a+1}{16}, p^{16a}) &= 16,
\end{aligned}
\end{equation}
for any odd prime $p$ and any positive integer $a$ (for $t=1$ we get \eqref{r, g=2}).
\end{exam}

\begin{rem}
Note that Corollary \ref{corolito} gives infinite families of Waring numbers $(k_i, q_i)$ such that $g(k_i,q_i)=b$ with $b=r^t$. 
In particular, taking any sequence $\{a_i\}$ with $a_i \mid a_{i+1}$ for all $i$, we get a tower of finite fields 
$F_i = \ff_{q^{a_i}}$, $q=p^{r^t}$,
such that $g(k_i,q_i)=b$ for every $i$, where $k_i=\tfrac 1b \Psi_b(p^{a_i})$ and $q_i=q^{a_i}$, thus answering the question posed at the final of Section 4.
\end{rem}

\section{A lower bound for $g(k,p)$ from circulant GP-graphs}
As we have already mentioned in Section 2, there are three lower bounds for Waring numbers given by \eqref{lower 1} and \eqref{lower 2}. 
In this section, by using the known estimates of certain circulant graphs, we will find another lower bound for $g(k,p)$ in the case $p$ is prime.

\begin{prop} \label{lower bound}
Let $p$ be an odd prime and $h \in \N$. 
If $p\equiv 1 \pmod{2h}$ then 
\begin{equation} \label{lower 3}
g(\tfrac{p-1}{2h},p) \ge \tfrac 12 \sqrt[h]{h! \, p} - \tfrac{h+1}2.
\end{equation}
In particular, if $\sqrt[h]{h! \, p} > h+1$ we have $g(\tfrac{p-1}{2h},p) \ge \lceil \tfrac 12 \sqrt[h]{h! \, p} - \tfrac{h+1}2 \rceil$.
\end{prop}

\begin{proof}
A circulant graph is a graph whose adjacency matrix is circulant. Hence, circulant graphs can be seen as the Cayley graph of a cyclic group, i.e.\@ $\mathrm{Cay}(\Z_m, S)$ with $S$ any subset of $\Z_m$ not containing  $0$ (not necessarily symmetric). 
Notice that, since $\ff_p = \mathbb{Z}_p$, the graph 
$$\Gamma(k,p) = \mathrm{Cay}(\Z_p, R_k)$$ 
is circulant.
Recall that, when $p$ is odd, $R_k$ is symmetric if and only if $k \mid \frac{p-1}2$. 
By hypothesis, if we take $k=\frac{p-1}{2h} \in \mathbb{Z}$ then $k \mid \frac{p-1}2$, i.e\@ $R_k$ is symmetric in this case.

It is shown in \cite{WC} that the diameter of circulant graphs of a special form can be estimated from below. In fact, if 
$C_{m,\ell} = \mathrm{Cay}(\mathbb{Z}_{m},S_\ell)$ 
with $S_\ell = \{\pm 1,\pm a_2, \ldots,\pm a_\ell \}$
then 
\begin{equation}\label{diamcirc}
 \delta(C_{m, \ell}) \ge \tfrac 12 \sqrt[\ell]{\ell! \, m} - \tfrac{\ell+1}2.
\end{equation}

We now show that $\G(k,p)$ is the form $C_{p,\ell}$ for some $\ell$. 
Note that $S_\ell = T_\ell \cup (-T_\ell)$ where $T_\ell = \{1,a_2,\ldots,a_\ell\}$. 
Thus, we are lead to show that $R_k = \{x^k : x \in \Z_p^*\}$ is $S_\ell$ with $\ell = h = \frac{p-1}{2k}$. Clearly, 
$1\in R_k$ since $R_k$ is a multiplicative subgroup of $\ff_{p}^*$.
By symmetry, $x^k \in R_k$ if and only if $-x^k \in R_k$; and $x^k\ne -x^k$ since $p$ is odd. 
Also, it is clear that $|R_k|=\frac{p-1}k$ and hence $\ell = \frac{p-1}{2k}$.
Thus, $\G(k,p) = C_{p,\frac{p-1}{2k}}$. This and Theorem~\ref{gdiam} together imply 
$$g(k,p) = \delta(\Gamma_{k,p}) = \delta(C_{p,h}).$$
The result thus follows by \eqref{diamcirc} with $m=p$ and $\ell=h=\frac{p-1}{2k}$.
\end{proof}

As a direct consequence of the last proposition we get the following bounds for the smallest values of $h$ in \eqref{lower 3}, 
i.e.\@ $h=2,3,4,5$.

\begin{coro} \label{coro bound}
If $p$ is an odd prime we have
\begin{equation} 
\begin{aligned}
g(\tfrac {p-1}{4}, p)  & \ge \tfrac 12 \sqrt{2p} - \tfrac 32      & & \qquad \text{ if } p \equiv 1 \!\!\pmod{4}, \\[1mm]
g(\tfrac {p-1}{6}, p)  & \ge \tfrac 12 \sqrt[3]{6p} - 2           & &\qquad \text{ if } p \equiv 1 \!\!\pmod{6}, \\[1mm]
g(\tfrac {p-1}{8}, p)  & \ge \tfrac 12 \sqrt[4]{24p} - \tfrac 52  & &\qquad \text{ if } p \equiv 1 \!\!\pmod{8}, \\[1mm]
g(\tfrac {p-1}{10}, p) & \ge \tfrac 12 \sqrt[5]{120p} - 3         & &\qquad \text{ if } p \equiv 1 \!\!\pmod{10}.
\end{aligned}
\end{equation}
\end{coro}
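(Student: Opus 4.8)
The plan is to obtain each of the four inequalities as a direct specialization of Proposition \ref{lower bound}, simply by setting the parameter $h$ equal to $2,3,4,5$ in turn. Since Corollary \ref{coro bound} is stated as an immediate consequence, the entire proof amounts to substituting these values into the general bound \eqref{lower 3} and verifying that the constants match. First I would recall the general statement: for an odd prime $p$ with $p \equiv 1 \pmod{2h}$, we have
$$ g(\tfrac{p-1}{2h},p) \ge \tfrac 12 \sqrt[h]{h!\, p} - \tfrac{h+1}2. $$

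Next I would treat the four cases one at a time. For $h=2$, the congruence $p \equiv 1 \pmod{2h}$ becomes $p \equiv 1 \pmod 4$, the denominator $2h$ becomes $4$, the factorial $h! = 2$, and the shift $\tfrac{h+1}2 = \tfrac 32$; substituting gives $g(\tfrac{p-1}4,p) \ge \tfrac 12 \sqrt{2p} - \tfrac 32$. For $h=3$ we get $p \equiv 1 \pmod 6$, $h! = 6$, and $\tfrac{h+1}2 = 2$, yielding $g(\tfrac{p-1}6,p) \ge \tfrac 12 \sqrt[3]{6p} - 2$. For $h=4$, the conditions are $p \equiv 1 \pmod 8$, $h! = 24$, $\tfrac{h+1}2 = \tfrac 52$, giving $g(\tfrac{p-1}8,p) \ge \tfrac 12 \sqrt[4]{24p} - \tfrac 52$. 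Finally, for $h=5$ we have $p \equiv 1 \pmod{10}$, $h! = 120$, $\tfrac{h+1}2 = 3$, so $g(\tfrac{p-1}{10},p) \ge \tfrac 12 \sqrt[5]{120p} - 3$. Each of these matches the corresponding line in the statement exactly.

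There is essentially no obstacle here, since the result is obtained by pure substitution and arithmetic simplification of the already-proved Proposition \ref{lower bound}. The only point requiring any care is to check that the hypothesis $p \equiv 1 \pmod{2h}$ of the proposition is correctly translated into each of the four congruence conditions $p \equiv 1 \pmod 4, \pmod 6, \pmod 8, \pmod{10}$, and that the numerical values of $h!$ and $\tfrac{h+1}2$ are computed without error. Thus I would write the proof in a single short sentence, noting that the four inequalities follow from Proposition \ref{lower bound} by taking $h = 2, 3, 4, 5$ respectively and simplifying the resulting constants.
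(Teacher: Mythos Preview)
Your proposal is correct and takes exactly the same approach as the paper: the corollary is stated there as a direct consequence of Proposition~\ref{lower bound} obtained by setting $h=2,3,4,5$, and your substitutions and arithmetic checks match precisely.
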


\begin{exam}
	Consider the prime $37$, we have $37\equiv 1 \pmod{2h}$ for $h=2,3,6$. Hence, by Proposition \ref{lower bound} we have 
	$$g(9,37) \ge \lceil \tfrac{ \sqrt{2\cdot 37} -3}{2} \rceil, \qquad 
	g(6,37) \ge \lceil \tfrac{\sqrt[3]{6\cdot 37}}{2} -2 \rceil \qquad \text{and} \qquad 
	g(3,37) \ge \tfrac{\sqrt[6]{720 \cdot 37} -7}{2} .$$ 
	Thus, by the first item ($a$) in \S 2.2 we get 
	\begin{align*}
	3	&\le g(9,37) \le 9, \\[1mm]
	2	&\le g(6,37) \le 6, \\[1mm]
	0 &\le g(3,37) \le 3. 
	\end{align*}
	However $g(3,37))$ exists, i.e.\@ $g(3,37)\ge 1$ since $3$ is primitive divisor of $36$ (see Lemma \ref{equiv conn}).
\end{exam}

\subsubsection*{Comparison with other lower bounds}
We now compare our bound \eqref{lower 3} with the lower bounds given in \S 2.3. \sk 

\noindent ($a$)
It is difficult to compare the bounds in \eqref{lower 1} and \eqref{lower 3}. For $n=2h$, we have to compare the exponents of the roots $\varphi(2h)$ with $h$. Note that we always have $\varphi(2h) \le h$, with equality if and only if $h$ is a power of $2$. Even in this favorable case when $\varphi(2h)=h$, the main difficulty lies in the constant $c_n$. 
Let $\omega$ be a primitive $n$-th root of unity in $\mathbb{Z}_p^*$ and $d=\varphi(n)$. For each 
$0\le j\le n-1$ on can write 
$\omega^j = \sum_{i=0}^{d-1} r_{i,j} \, \omega^i$. Then, the constant is defined by (see the proof of Claim 3.3 in \cite{GS}) 
$$c_n=\max_{i,j} \, |r_{i,j}|,$$ 
and we do not have an estimate for the growth of these numbers.

\sk
\noindent ($b$) The lower bounds in \eqref{lower 2} and \eqref{lower 3} are only comparable in the base cases, that is for $p$ odd and taking $r=2$ in \eqref{lower 2} and $h=1$  in \eqref{lower 3}. In both cases we get the same bound
$g(\tfrac{p-1}2, p) \ge \tfrac p2 -1$, although it is of course well-known that $g(\tfrac{p-1}2, p) = \tfrac{p-1}2$. \sk

\noindent ($c$)
The comparison between \eqref{lower 4} and \eqref{lower 3} is more interesting. The bound in \eqref{lower 4} seems to be better than the one in \eqref{lower 3} for integers $h$ which are not powers of $2$. 
However, for $h=2^t$ with $t>1$ and $p$ big enough, our bound does improve the one obtained by Cipra, Cochrane and Pinner.
We kindly thank Sam Chow for pointing this out to us. Indeed, for $h=2^t$ we have that $\varphi(2h)=h$ and $C_n=1$ in this case (see \eqref{Cn}), and hence we have  
$$ \sqrt[h]{h! p} - (h+1) >  (1-\tfrac{1}{p}) \sqrt[h]{p} \qquad \Longleftrightarrow \qquad 
\big(\sqrt[h]{h!} - (1- \tfrac 1p) \big) \sqrt[h]{p} > h+1.$$
Since $\sqrt[h]{h!}>1$ for $t>1$ we have $\sqrt[h]{h!} - (1-\tfrac 1p) >0$ and hence, for $p$ big enough, the expression on the right above holds.
Hence for $h=2^{t}$ we have 
$$g(\tfrac{p-1}{2^{t+1}} ,p) \ge \sqrt[2^t]{(2^t)!} \sqrt[2^t]{p} - \tfrac{h+1}2 \ge \tfrac{p-1}{p} \sqrt[2^t]{p}.$$ 
for any $t>1$. That is, the lower bound in \eqref{lower 3} is better that the one in \eqref{lower 4} for $n$ a power of 2.

\subsection*{Acknowledgments}
This work was initiated during a visit of the first author to UAM (Madrid) and UVa (Valladolid) in 2018. He wishes to thank the kind hospitality of both Professor Orlando Villamayor at UAM (and the grant MTM2015-68524-P) and Professor Antonio Campillo at UVa (and the grant of the research group GIR-Singacom).


\begin{thebibliography}{XXX}
	\bibitem{C}{\sc A.\@ Cauchy}. 
	\textit{Recherches sur les nombres}. 
	Ecole Polytechnique \textbf{9} (1812), 31--64.
	
	
	\bibitem{Ci}{\sc J.\@ A.\@ Cipra}. 
	\textit{Waring's number in a finite field}. 
	Integers \textbf{9:4} (2009), 435--440.
	
	
	\bibitem{CiCP}{\sc J.\@ A.\@ Cipra, T.\@ Cochrane, C.\@ Pinner}. 
	\textit{Heilbronn's conjecture on Waring's number (mod $p$)}. 
	J.\@ Number Theory \textbf{125:2} (2007), 289--297.
	
	
	\bibitem{CP}{\sc T.\@ Cochrane, C.\@ Pinner}. 
	\textit{Sum-product estimates applied to Waring's problem mod $p$}. 
	Integers \textbf{8:1} (2008), A46.
	
	\bibitem{CMS} \textsc{S.\@ Chowla, H.B.\@ Mann, E.G.\@ Straus}. \textit{Some applications of the Cauchy–Davenport theorem}. Norske Vid.\@ Selsk.\@ Forh.\@ Trondheim 32 (1959), 74--80.
	
	
	\bibitem{GV}{\sc A.\@ Garcia, J.\@ F.\@ Voloch}. 
	\textit{Fermat curves over finite fields}. 
	J.\@ Number Theory \textbf{30:3} (1988), 345--356.
	
			
	\bibitem{GS}{\sc C.\@ Garcia, P.\@ Sol\'e}. 
	\textit{Diameter lower bound for Waring graphs and multiloop networks}. 
	Discrete Math.\@ \textbf{111:1-3} (1993), 257--261.
	
	
	\bibitem{Gl}{\sc A.\@ A.\@ Glibichuk}. 
	\textit{Sums of powers of subsets of an arbitrary finite field}. 
	Izvestiya: Mathematics \textbf{75:2} (2011), 253.
	
	
	\bibitem{GlR}{\sc A.\@ Glibichuk, M.\@ Rudnev}. 
	\textit{On additive properties of product sets in an arbitrary finite field}. 
	Journal d'Analyse Math\'ematique \textbf{108:1} (2009), 159--170.
	
	
	\bibitem{KK}{\sc K.\@ Kononen}.
	\textit{More exact solutions to Waring's problem for finite fields}. 
	Acta Arith.\@ \textbf{145:2} (2010), 209--212.
	
	
	\bibitem{Kon}{\sc S.\@ V.\@ Konyagin}. 
	\textit{On estimates of sums of Gauss and Waring's problem for prime module}.
	Proc.\@ Steklov Inst.\@ Math.\@ \textbf{198} (1994), 105--117.

	
	\bibitem{LP}{\sc T.\@ K.\@ Lim, C.\@ Praeger}. 
	\textit{On Generalised Paley Graphs and their automorphism groups}.
	 Michigan Math.\@ J.\@  \textbf{58} (2009), 294--308.
	
	
	\bibitem{MC}\textsc{O.\@ Moreno, F.\@ N.\@ Castro}
	\textit{Optimal divisibility for certain diagonal equations over finite fields}
	Soci\'et\'e Math\'ematique de France, S\'eminaires et Congr\'es 11 (2005), 29--40.
	
	
	\bibitem{MC2}\textsc{O.\@ Moreno, F.\@ N.\@ Castro}
	\textit{Optimal divisibility for certain diagonal equations over finite fields}
	J.\@ Ram.\@ Math.\@ Soc.\@ \textbf{23:1} (2008), 43--61.
	
		
	\bibitem{MP}{\sc G.\@ L.\@ Mullen, D.\@ Panario}. 
	\textit{Handbook of finite fields}. 
	Chapman and Hall/CRC, 2013.
	
	
	\bibitem{PV} \textsc{Ricardo A.\@ Podest\'a, Denis E.\@ Videla}.
	\textit{The spectra of generalized Paley graphs and applications}. 
	arXiv:1812.03332 (2018).
	
	
	\bibitem{Sm}{\sc C.\@ Small}. \textit{Waring's problem mod $n$}. 
	The American Mathematical Monthly \textbf{84:1} (1977),  12--25.
	
	
	\bibitem{Sm2}{\sc C.\@ Small}. \textit{Sum of powers in large finite fields}. 
	Proc.\@ Amer.\@ Math.\@ Soc.\@ \textbf{65} (1977), 35--36.
	

	\bibitem{Win} \textsc{A.\@ Winterhof}.
	\textit{On Waring's problem in finite fields}.
	Acta Arith.\@ \textbf{87:2} (1998), 171--177.
	
	
	\bibitem{Win2} \textsc{A.\@ Winterhof}.
	\textit{A note on Waring's problem in finite fields}.
	Acta Arith.\@ \textbf{96:4} (2001), 365--368.
	
	
	\bibitem{WW} \textsc{A.\@ Winterhof, C.\@ van de Woestijne}.
	\textit{Exact solutions to Waring's problem for finite fields}.
	Acta Arith.\@ \textbf{141:2} (2010), 171--190.
	
	
	\bibitem{WC} {\sc C.\@ K.\@ Wong, D.\@ Coppersmith}. \textit{A combinatorial problem related to multimodule memory organization}. Journal of the ACM  \textbf{21:3} (1974), 392--402.
	
\end{thebibliography}
\end{document}